\begin{document}

\sloppy
\renewcommand{\theequation}{\arabic{section}.\arabic{equation}}
\thinmuskip = 0.5\thinmuskip
\medmuskip = 0.5\medmuskip
\thickmuskip = 0.5\thickmuskip
\arraycolsep = 0.3\arraycolsep

\newtheorem{theorem}{Theorem}[section]
\newtheorem{lemma}[theorem]{Lemma}
\newtheorem{corollary}[theorem]{Corollary}
\newtheorem{proposition}[theorem]{Proposition}
\newtheorem{definition}[theorem]{Definition}
\newtheorem{remark}[theorem]{Remark}
\renewcommand{\thetheorem}{\arabic{section}.\arabic{theorem}}
\def\prf{\begin{proof}}
\def\prfe{\end{proof}}

\def\be{\begin{equation}}
\def\ee{\end{equation}}
\def\bea{\begin{eqnarray}}
\def\eea{\end{eqnarray}}
\def\beas{\begin{eqnarray*}}
\def\eeas{\end{eqnarray*}}

\newcommand{\R}{\mathbb R}
\newcommand{\N}{\mathbb N}
\newcommand{\T}{\mathbb T}
\newcommand{\K}{\mathbb S}
\newcommand{\leftexp}[2]{{\vphantom{#2}}^{#1}{#2}}

\newcommand{\todo}[1]{\vspace{5 mm}\par \noindent
\marginpar{\textsc{ \hspace{.2 in}   \textcolor{red}{ To Fix}}} \framebox{\begin{minipage}[c]{0.95
\textwidth} \tt #1 \end{minipage}}\vspace{5 mm}\par}
\newcommand{\eqdef}{\overset{\mbox{\tiny{def}}}{=}}

\def\g{\partial}
\def\E{\mathcal{ E}}
\def\D{\mathcal{D}}
\def\t{\bar{\partial}} 
\def\div{\mbox{div}}
\def\curl{\mbox{curl}}
\def\open#1{\setbox0=\hbox{$#1$}
\baselineskip = 0pt
\vbox{\hbox{\hspace*{0.4 \wd0}\tiny $\circ$}\hbox{$#1$}}
\baselineskip = 11pt\!}
\def\A{\leftexp{\kappa}{\!\!\!\!\!\!A}}
\def\a{\leftexp{\kappa}{\!\!\!a}}
\def\kA{\leftexp{\kappa}{\!\!\!\tilde{A}}}
\def\w{\leftexp{\kappa}{\!\!\!w}}
\def\kw{\leftexp{\kappa}{\!\!\!\tilde{w}}}
\def\qn{\open{q}}  
\def\bp{{\bar \partial}}
\def\lessim{\lesssim}

\title{Global stability of steady states in the classical Stefan problem}
\author{Mahir Had\v{z}i\'{c} and Steve Shkoller}
\address{Department of Mathematics,
King's College London, UK}
\email{mahir.hadzic@kcl.ac.uk}
\address{Department of Mathematics, University of California, Davis, CA 95616, USA}
\email{shkoller@math.ucdavis.edu}

\subjclass{35R35,  35B65 , 35K05, 80A22}
\keywords{free-boundary problems,  Stefan problem, regularity, stability, global existence}

\begin{abstract}
The classical one-phase Stefan problem (without surface tension) allows for a continuum of steady state solutions, given by an arbitrary (but sufficiently smooth) domain together with zero temperature.
We prove global-in-time stability of such steady states, assuming a sufficient degree of smoothness on the initial domain, but without any a priori restriction on the convexity properties of the initial shape.
This is an extension of our previous result \cite{HaSh2} in which we studied nearly spherical shapes.
\end{abstract}

\maketitle

\setcounter{tocdepth}{1}


\section{Introduction}

\subsection{The problem formulation}
We consider the problem of global existence and  asymptotic stability of classical solutions to the {\em classical} Stefan problem, which
models the  evolution of  the time-dependent phase boundary  between  liquid and solid phases.
The temperature $p(t,x)$  of the liquid 
and the {\it a priori unknown}  moving phase boundary $\Gamma(t)$ must satisfy the
following system of equations:
\begin{subequations}
\label{E:stefan}
\begin{alignat}{2}
p_t-\Delta p&=0&&\ \text{ in } \ \Omega(t)\,;\label{E:heat}\\
\mathcal{V} (\Gamma(t))&=-\g_np  && \ \text{ on } \ \Gamma(t)\,;\label{E:neumann}\\
p&=0&& \ \text{ on } \ \Gamma(t)\,;\label{E:dirichlet}\\
p(0,\cdot)=p_0\,, & \ \Omega (0)=\Omega &&\,.\label{E:initial}
\end{alignat}
\end{subequations}
For each instant of time $t \in [0,T]$, $\Omega(t)$ is a time-dependent open subset of $\R^d$ with $d\ge 2$, and $\Gamma(t)\eqdef \partial \Omega(t)$ denotes
the moving, time-dependent  free-boundary.

The heat equation (\ref{E:heat})  models thermal diffusion
 in the bulk $\Omega(t)$ with thermal diffusivity set to $1$. The boundary transport equation  (\ref{E:neumann})
states that each point on the moving boundary is transported with normal velocity equal to
 $-\g_np=-\nabla p\cdot n$,  the normal derivative of $p$ on $\Gamma(t)$.  Here, 
 $n(t, \cdot )$ denotes the outward pointing unit normal to $\Gamma(t)$,  and 
$ \mathcal{V} (\Gamma(t))$ denotes the speed or the normal velocity of the hypersurface $\Gamma(t)$.
The homogeneous Dirichlet boundary condition~(\ref{E:dirichlet}) is termed the {\em classical Stefan condition}
and problem~(\ref{E:stefan}) is called the {\em classical Stefan problem}. It implies that the
freezing of the liquid occurs at a constant temperature $p=0$. 
Finally, in~\eqref{E:initial} we specify the initial temperature distribution
$p_0:\Omega\to\R$, as well as the initial geometry  $ \Omega$.   
Because the liquid phase $\Omega(t)$ is characterized by
the set $\{ x \in \mathbb{R}  ^d \ : \ p(x,t) >0 \}$, we shall consider initial data $p_0 >0$ in $\Omega$.
 Thanks to
 (\ref{E:heat}), the parabolic Hopf lemma implies that 
$\g_np(t)<0$ on $\Gamma(t)$ for $t>0$,  so we impose the  {\it non-degeneracy condition} (also known as the
{\em Rayleigh-Taylor sign condition} in fluid mechanics~\cite{Ray,Tay,Wu97,CoSh07, CoSh12, CoHoSh12}):
\be\label{E:tay}
-\g_np_0\ge \lambda > 0 \quad\text{on}\,\,\,\,\Gamma(0)
\ee
on our initial temperature distribution.  
Under the above assumptions, we proved in~\cite{HaSh} that (\ref{E:stefan}) is locally well-posed.
%

Steady states $(\bar{u},\bar{\Gamma})$ of~\eqref{E:stefan} consist of arbitrary  domains with $\bar{\Gamma}\in C^1$ and with temperature $\bar{u}\equiv0.$
The main goal of this paper is to prove global-in-time stability of such steady states, independent of any convexity assumptions. 
Our analysis employs high-order energy spaces, which are weighted by 
the normal derivative of the temperature along the moving boundary;  we create a {\it hybridized 
energy method},  combining  integrated quantities with  {\em pointwise} methods via the Pucci extremal operators,  which allow us to track the time-decay properties of the normal derivative of the temperature. 
This hybrid approach appears to be new, and is a natural extension of our previous work~\cite{HaSh2}, which necessitated perturbations of spherical initial domains. 

\subsection{Notation}
For any $s\geq0$ and given functions $f:\Omega\to\R$, $\varphi:\Gamma\to\R$ we set
$$
\|f\|_s\eqdef \|f\|_{H^s(\Omega)} \text{ and }
|\varphi|_s\eqdef \|\varphi\|_{H^s(\Gamma)}.
$$
If $i=1,...,d$ then 
$f,_i\eqdef \g_{x^i}f$ is the partial derivative of $f$ with respect to $x^i$. Similarly,
$f,_{ij}\eqdef \g_{x^i}\g_{x^j}f$, etc.   For time-differentiation,
$f_t\eqdef \g_tf$.   Furthermore, for a function $f(t,x)$,  we shall often write $f(t)$ for $f(t, \cdot )$, and
$f(0)$ to mean $f(0,x)$.
The space of continuous functions on $\Omega$ is denoted by $C^0(\Omega).$
For any given multi-index $\alpha=(\alpha_1,\dots,\alpha_d)$ we set 
\[
\g^{\vec{\alpha}} = \g_1^{\alpha_1}\dots\g_d^{\alpha_d}.
\]
We also define the tangential gradient $\t$ by   $\t f\eqdef   \nabla f - \g_N fN,$  where $N$ stands for the outward-pointing unit normal onto $\partial\Omega$
and $\g_N f = N\cdot \nabla f$ is the normal derivative of $f.$ By extending $N$ smoothly into a neighborhood of $\Gamma$ inside the interior of $\Omega$ we can define 
$\t$ on that neighborhood  in the same way.
We employ the following notational convention:
\[
\t f= (\t_1f,\dots,\t_d f),\quad\t^{\vec{\alpha}}f\eqdef (\t_1^{\alpha_1}f,\dots,\t_d^{\alpha_d}f) \,,
\]
where $ \vec{\alpha} = (\alpha_1,\dots,\alpha_d)$ denotes a multi-index.
The identity map on $\Omega$ is denoted by $e(x)=x$, while the identity matrix is denoted by $\text{Id}$.
 We use $C$ to denote a universal (or generic) constant that may change from inequality to inequality.
We write $X\lesssim Y$ to denote 
$X\leq C\, Y$. 
We use the notation $P(s)$ to denote a generic non-zero real polynomial function of $s^{1/2}$ with non-negative coefficients of order at least $3$:
\be\label{E:genericpolynomial}
P(s)=\sum_{i=0}^m c_{i}s^{\frac{3+i}{2}}, \quad c_i\ge0, \ \ m\in\N_0.
\ee
The Einstein summation convention is employed, indicating   summation over repeated indices.
%
%
\subsection{The initial domain $\Omega$ and the harmonic gauge}  
For our initial domain $\Omega$ we choose a simply connected domain $\Omega \subset \R^d$,
where the boundary $\partial\Omega$ will be denoted by $\Gamma.$ We further assume, without loss of generality, that the origin is contained in $\Omega,$
i.e. $0\in\Omega.$
We transform the Stefan problem~(\ref{E:stefan}) set on the moving domain $\Omega(t)$,  to an
equivalent problem on the fixed domain $\Omega$; to do so, we
use a system of {\it harmonic coordinates}, also known as the harmonic gauge or Arbitrary Lagrangian Eulerian (ALE) coordinates in fluid mechanics.

The moving domain $\Omega(t)$  will be represented as the image of a time-dependent family of diffeomorphisms
$\Psi(t) :  \Omega \mapsto \Omega(t)$.  
Let $N$ represent the outward pointing unit normal to $\Gamma$ and let $\Gamma(t)$ be given by
\[
\Gamma(t)=\{x\,|\,\,\,x=x_0+h(t,x_0)N,\,\,\,\,x_0\in\Gamma\}.
\]
Assuming that the signed height function $h(t, \cdot)$ is sufficiently regular and $\Gamma(t)$ remains a small graph over $\Gamma$, we can define a diffeomorphism 
$\Psi:\Omega\to\Omega(t)$
as the elliptic extension of the  boundary diffeomorphism $x_0 \mapsto x_0+h(t,x_0)N $, by solving the following Dirichlet problem:
\begin{align} 
\Delta\Psi&=0 \ \text{ in } \ \Omega,\label{E:gauge}\\
\Psi(t,x)&=x+h(t,x)N(x) \ \,\,\,\, x\in\Gamma.\notag
\end{align} 

We introduce the following new variables set on
the fixed domain $\Omega$:
\begin{alignat*}{2} 
          q&= p\circ \Psi\ && \text{(\textit{temperature})},\\
          v&=-\nabla p\circ\Psi\ && \text{(\textit{``velocity"})},\\
	A&= [D\Psi]^{-1}\ &&\text{(\textit{inverse of the deformation tensor})},\\
	J&= \det D\Psi\ &\quad&\text{(\textit{Jacobian determinant})},
\end{alignat*}
We now pull-back the Stefan problem~\eqref{E:stefan} from $\Omega(t)$  onto the fixed domain $\Omega.$
If we let $g$ denote the Jacobian of the transformation $\Psi(t,\cdot)|_{\Gamma}:\Gamma\to\Gamma(t),$ and let
$n(t,\cdot )$ denote the outward-pointing  unit normal  vector  to the moving surface $\Gamma(t),$  then 
the following relationship holds~\cite{CoSh10}:
\[
J^{-1}\sqrt{g}\,\,n_i\circ\Psi(t,x)=A^k_i(t,x) N_k(x).
\]
It thus follows that
the outward-pointing  unit normal  vector $n(t,\cdot )$ to the moving surface $\Gamma(t)$ can be written as
$
(n \circ \Psi)(t, x) =A^TN/|A^TN| \,.
$
We shall henceforth drop the explicit composition with the diffeomorphism $\Psi$, and simply write
$$ n(t, x) =A^TN/|A^TN| $$ for the unit normal to the moving boundary at the point $\Psi(t, x) \in \Gamma(t)$.

The classical Stefan problem on  the fixed domain $\Omega$ 
is written as (see~\cite{HaSh,HaSh2})
\begin{subequations}
\label{E:ALE}
\begin{alignat}{2}
q_t-A^j_i(A^k_iq,_k),_j&=-v\cdot \Psi_t&& \ \text{ in } \ [0,T) \times \Omega \,,\label{E:ALEheat}\\
v^i+A^k_iq,_k&=0&&\ \text{ in } \ [0,T) \times \Omega\,,  \label{E:ALEv} \\
q&=0&& \ \text{ on } \  [0,T) \times \Gamma \,,\label{E:ALEdirichlet}\\
h_t &= \frac{v \cdot A^TN}{N \cdot A^TN} && \ \text{ on } \  [0,T) \times \Gamma \,,  \label{E:ALEneumann2}\\
\Delta \Psi &=0&& \ \text{ on } \ [0,T) \times \Omega\,,\label{E:Psi_elliptic}\\
 \Psi &=e + h N&& \ \text{ on } \ [0,T) \times \Gamma \,,\label{E:Psi_laplace}\\
q&=q_0>0&&\ \text{ on } \ \{t=0\} \times \Omega   \,,\label{E:ALEinitial} \\
h&=0&&\ \text{ on } \ \{t=0\} \times \Gamma  \,,\label{E:ALEinitialh}
\end{alignat}
\end{subequations}

Problem~(\ref{E:ALE}) is a  reformulation of the problem~(\ref{E:stefan}). 
Observe that the boundary condition~(\ref{E:ALEneumann2}) is equivalent to
\be\label{E:ALEneumann}
\Psi_t\cdot {n}(t)=v\cdot {n}(t) \ \text{ on } \ [0,T) \times \Gamma  \ \text{ so that } \ \Psi(t)(\Gamma) = \Gamma(t) \,,
\ee
which is but a restatement of the Stefan condition~\eqref{E:neumann}. Since the factor $N\cdot A^TN$ will show up repeatedly in various calculations, it is useful to introduce the abbreviation:
\be\label{E:thefactor}
\Lambda\eqdef N\cdot A^TN.
\ee
Note that initially $\Lambda=1$ and it will remain close to $1$, since for small $h$ the transition matrix $A$ remains close to the identity matrix.

Since the identity map $e: \Omega \to \Omega$ is harmonic in $\Omega$ and $\Psi -e = h N$ on $\Gamma$, 
standard elliptic regularity theory for solutions to (\ref{E:gauge}) shows that for $t \in [0,T)$,
\begin{equation*}\label{DM}
\|\Psi(t, \cdot) -e\|_{H^s(\Omega)}\le C\|h(t, \cdot )\|_{H^{s-0.5}(\Gamma)}\,, \ s>0.5,
\end{equation*} 
so that for $h$ sufficiently small and $s$ large enough,  the Sobolev embedding theorem shows that $\nabla\Psi$ is close to $\mbox{Id}$, and by the inverse function theorem,
$\Psi$ is a diffeomorphism.  

\subsubsection{The high-order energy and the high-order norm}\label{S:norms}
We will specialize to the case $d=2$ for the remainder of this paper. The case $d=3$ requires only our norms to contain one more degree of differentiability, while the rest of the argument is entirely analogous.

To define the natural energies associated with the main problem, we must employ tangential derivatives in a neighborhood which is sufficiently close 
to the boundary $\Gamma.$
Near $\Gamma=\partial \Omega $,  it is convenient to use tangential derivatives $\t^{\vec{\alpha}}$, 
while away from the boundary, Cartesian partial derivatives $\partial^{x_i}$ are
natural. 
For this reason,  we introduce a  non-negative $C^{\infty}$  cut-off function $\mu:\bar{\Omega}\to\R_+$ with the
property
\[
\mu(x)\equiv0\,\,\,\,\text{ if }\,\,|x|\leq\rho;\qquad\mu(x)\equiv1\,\,\,\,\text{ if }\,\,\text{dist}(x,\Gamma)\le\sigma.
\]
Here $\rho,\sigma\in\R^+$ are chosen in such a way that $B_{\rho}(0)\Subset\Omega$ and $\{x|\,\text{dist}(x,\Gamma)\le\sigma\}\in\Omega\setminus B_{\rho}(0).$ 
\begin{definition}[Higher-order energies]\label{D:highorderenergies}
The following high-order energy and dissipation functionals are fundamental to our analysis:
\begin{align}
&\E(t)=\E(q,h)(t)\eqdef  \nonumber \\
& \frac{1}{2}\sum_{|\vec{\alpha}|+2b\leq5}\|\mu^{1/2}\t^{\vec{\alpha}}\partial_t^bv\|_{L^2_x}^2
+\frac{1}{2}\sum_{|\vec{\alpha}|+2b\leq6}|(-\g_Nq)^{1/2}\Lambda\t^{\vec{\alpha}}\g_t^b\Psi|_{L^2_x}^2
+\frac{1}{2}\sum_{|\vec{\alpha}|+2b\leq6}\|\mu^{1/2}(\t^{\vec{\alpha}}\partial_t^bq+\t^{\vec{\alpha}}\partial_t^b\Psi\cdot v)\|_{L^2_x}^2 \nonumber \\
& \sum_{|\vec{\alpha}|+2b\leq5}\|(1-\mu)^{1/2}\g^{\vec{\alpha}}\partial_t^bv\|_{L^2_x}^2
+\frac{1}{2}\sum_{|\vec{\alpha}|+2b\leq6}\|(1-\mu)^{1/2}(\g^{\vec{\alpha}}\partial_t^bq+\g^{\vec{\alpha}}\partial_t^b\Psi\cdot v)\|_{L^2_x}^2 \nonumber
\end{align}
and
\begin{align}
& \D(t)=\D(q,h)(t)\eqdef  \nonumber \\
& \sum_{|\vec{\alpha}|+2b\leq6}\|\mu^{1/2}\t^{\vec{\alpha}}\partial_t^bv\|_{L^2_x}^2
+\sum_{|\vec{\alpha}|+2b\leq 5}|(-\g_Nq)^{1/2}\Lambda\t^{\vec{\alpha}}\g_t^b\Psi_t|_{L^2_x}^2
+\sum_{|\vec{\alpha}|+2b\leq 5}\|\mu^{1/2}(\t^{\vec{\alpha}}\partial_t^bq_t+\t^{\vec{\alpha}}\partial_t^b\Psi_t\cdot v)\|_{L^2_x}^2 \nonumber \\ 
& +\sum_{|\vec{\alpha}|+2b\leq6}\|(1-\mu)^{1/2}\g^{\vec{\alpha}}\partial_t^bv\|_{L^2_x}^2
+\sum_{|\vec{\alpha}|+2b\leq5}\|(1-\mu)^{1/2}(\g^{\vec{\alpha}}\partial_t^bq_t+
\g^{\vec{\alpha}}\partial_t^b\Psi_t\cdot v)\|_{L^2_x}^2\,,  \nonumber
\end{align}
where we recall the definition of $ \Lambda $ given in \eqref{E:thefactor}.
Finally, we introduce the total energy $E(t):$
\be
E(t)\eqdef  \sup_{0\le s\le t}\E(\tau) + \int_0^t\D(\tau)\,d\tau.
\ee
\end{definition}

Note that the boundary norms of the gauge function $\Psi$ are weighted by
$\sqrt{-\g_N q}$.
We thus introduce the time-dependent function 
\[
\chi(t)\eqdef \inf_{x\in\Gamma}(-\g_Nq)(t,x)>0,
\]
which will be used to track the weighted behavior of $h$.
It is important to note, that due to the smoothness assumption on $\Gamma$ it is easy to see that for any local coordinate chart $(\g_{s_1},\dots,\g_{s_{d-1}})$ for $\Gamma$ we have the equivalence
\be\label{E:EQUIVALENCEONGAMMA}
\sum_{|\vec{\alpha}|+2b\leq6}|(-\g_Nq)^{1/2}\Lambda\t^{\vec{\alpha}}\g_t^b\Psi|_{L^2_x}^2\approx \sum_{\beta=(\beta_1,\dots,\beta_{d-1}) \atop |\beta|+2b\le 6} |(-\g_Nq)^{1/2}\g_{s_1}^{\beta_1}\dots\g_{s_{d-1}}^{\beta_{d-1}}h|_{L^2(\Gamma)}^2,
\ee
where $X\approx Y$ means that there exist positive constants $C_1$ and $C_2$ such that $C_1 Y \le X\le C_2 Y.$ In our case, the two constants depend on the choice of the local chart. 

\begin{definition}[High-order norm]
The following high-order norm is fundamental to our analysis:
\begin{align} \label{E:highordernorm}
S(t)& \eqdef \sum_{l=0}^3\|\g_t^lq\|_{L^{\infty}H^{6-2l}}^2
+ \|q\|_{L^2H^{6.5}}^2\notag
+\sum_{l=0}^{2}\|\g_t^lq_t\|_{L^2H^{5-2l}}^2 \\
& \ \ + \sup_{0\le s\le t}e^{\beta s}\|q(s,\cdot)\|_{H^5}^2+ \sum_{|\vec{\alpha}|+2l\le6}\|\t^{\vec{\alpha}}\g_t^lv\|_{L^2L^2}^2 \notag\\
& \ \ +\chi(t)\sum_{l=0}^3|\g_t^lh|_{L^{\infty}H^{6-2l}}^2 
+\chi(t)\sum_{l=0}^2|\g_{t}^{l+1}h|_{L^2H^{5-2l}}^2
+|h|_{L^{\infty}H^{4.5}}^4  \notag \\
\end{align}
Here $\beta = 2\lambda - \eta,$ where $\lambda$ is the smallest eigenvalue of the Dirichlet-Laplacian on $\Omega$ and $\eta>0$ is a small but fixed number to be determined later.
\end{definition}

\begin{remark}
A subtle feature of the above definition is the {\bf loss of a $ {\frac{1}{2}} $-derivative}-phenomenon for the temperature $q.$ By the parabolic scaling 
(where one time derivative scales like two spatial derivatives), one might expect $q$ to belong to $L^2H^7([0,T);\Omega),$ since 
$\g_t^{l+1}q\in L^2H^{5-2l}([0,T);\Omega),$ for $l=0,1,2.$ This is, however, not the case, as the height-evolution equation~\eqref{E:ALEneumann2} 
scales  in a hyperbolic fashion, and thus places a restriction on the top-order regularity of the unknown $q,$ allowing only for 
 $q\in L^2H^{6.5}([0,T);\Omega).$
\end{remark}

\subsection{Steady states}

Note that any $C^1$ simply connected domain represents a steady state of~\eqref{E:stefan}. In other words, for any simply connected domain $\bar{\Omega}\in C^1$, the pair
$(\bar{u}\equiv 0, \bar{\Gamma}=\partial\bar{\Omega})$ forms a time-independent solution to~\eqref{E:stefan}. 
In particular, it is challenging to determine {\bf which}  steady state a small perturbation will decay to.
 Thus the problem of asymptotic stability, rather than the optimal regularity of weak/viscosity solutions, is one of the main motivating questions for this work. In particular, we work with classical solutions with a high degree of differentiability on the initial data.

\subsection{Rayleigh-Taylor sign condition or non-degeneracy condition on $q_0$}
With respect to $q_0$, condition (\ref{E:tay}) becomes 
\[
\inf_{x\in\Gamma}[-\g_Nq_0(x)]  \ge \delta > 0 \  \ \text{ on } \Gamma.
\]
For initial temperature distributions that are not necessarily strictly positive in $\Omega$, this condition was shown to be sufficient
for local well-posedness for (\ref{E:stefan}) (see \cite{HaSh, Me, PrSaSi}).  On the other hand, if we require strict
 positivity of our initial temperature function\footnote{Condition (\ref{E:positive}) is natural, since it determines the phase: $\Omega(t) = \{ q(t) >0\}$.}, 
\be\label{E:positive}
q_0>0\quad\text{ in }\Omega\,,
\ee
then the parabolic Hopf lemma (see, for example, \cite{Fr64})  guarantees that $-\g_N q(t,x)>0$ for $0<t<T$ on some a priori (possibly small)
time interval, which, in turn, shows that $\E$ and $\D$ are norms for $t>0$, but uniformity may be lost as $t\to 0$.  To ensure a uniform lower-bound
for $-\g_N q(t)$ as $t\to 0$,  we impose the Rayleigh-Taylor sign condition with the following lower-bound:
\be\label{E:WLOG}
-\g_Nq_0 \ge C_* \int_\Omega q_0 \, \varphi_1 dx \,,
\ee
Here, $\varphi_1$ is the positive first eigenfunction of the Dirichlet Laplacian $-\Delta$ on $\Omega$, and $C_*>0$ denotes a {\em universal} constant. 
The uniform
lower-bound in (\ref{E:WLOG}) thus ensures that our solutions are continuous in time; moreover, (\ref{E:WLOG}) allows us to  establish a 
time-dependent {\it optimal lower-bound} for the quantity
$
\chi(t)=\inf_{x\in\Gamma}(-\g_Nq)(t,x) >0
$
for all time $t\geq0$, which is crucial for our analysis.

\subsection{Main result}
Our main result is a global-in-time stability theorem for solutions of
the classical Stefan problem  for surfaces which are assumed to be close to a given sufficiently smooth domain $\Omega$
and for temperature fields close to zero. 
The notions
of near and close are measured by our energy norms as well as the dimensionless quantity
\be\label{E:k}
K\eqdef \frac{\| q_0\|_4}{\|q_0\|_0}.
\ee
as expressed in the following 
\begin{theorem}\label{T:main}
Let $(q_0,h_0)$ satisfy the 
Rayleigh-Taylor sign condition~(\ref{E:WLOG}), the strict positivity 
assumption~(\ref{E:positive}), and suitable compatibility conditions. 
Let $K$ be defined as in~(\ref{E:k}).
Then there exists an $\epsilon_0>0$ and a monotonically 
increasing function $F:(1, \infty )\to\R_+$, such that if
\be\label{E:smallassum}
S(0)<\frac{\epsilon_0}{F(K)},
\ee
then there exist unique solutions $(q,h)$ to  problem~(\ref{E:ALE})
satisfying
\[
S(t)<C\epsilon_0, \ \ t\in[0,\infty),
\]
for some universal constant $C>0$.
Moreover, the temperature $q(t) \to 0$ as $t \to \infty $ with bound
\[
\|q( t,\cdot )\|_{H^4(\Omega)}^2\leq C e^{-\beta t},
\]
where $\beta=2\lambda -O(\epsilon_0)$ and $\lambda$ is the smallest eigenvalue of the 
Dirichlet-Laplacian on $\Omega.$
The moving boundary $\Gamma(t)$ settles asymptotically to some nearby steady surface 
$\bar{\Gamma}$ and we have the uniform-in-time estimate
\[
\sup_{0\leq t<\infty}|h(t,\cdot )-h_0|_{4.5} \lessim \sqrt{ \epsilon _0}
\]
\end{theorem}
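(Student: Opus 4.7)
The plan is to combine a bootstrap continuation of the local well-posedness result of \cite{HaSh} with high-order energy estimates on $\E(t)$ and $\D(t)$, a pointwise lower bound on $\chi(t)=\inf_\Gamma(-\g_Nq)(t,\cdot)$ obtained via Pucci extremal operators, and an eigenvalue-based exponential decay argument for $q$. Given \eqref{E:smallassum}, the local theory produces a solution on some maximal interval $[0,T^*)$; writing $M(t)\eqdef\sup_{0\le s\le t}S(s)$, I would set up the continuity method by assuming $M(T)<C\epsilon_0$ on $[0,T]\subset[0,T^*)$ and work to upgrade this to $M(T)<\tfrac{C}{2}\epsilon_0$, which forces $T^*=\infty$. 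The a priori smallness in particular keeps $\Lambda$ near $1$, $\Psi$ a diffeomorphism, and the perturbed Dirichlet Laplacian within $O(\epsilon_0)$ of the one on $\Omega$.

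The core of the argument is a high-order energy identity: I would apply $\t^{\vec{\alpha}}\g_t^b$ near the boundary and $\g^{\vec{\alpha}}\g_t^b$ in the interior to \eqref{E:ALEheat}, test against the matching derivative of $q+\Psi\cdot v$, and integrate by parts. Using the Stefan condition \eqref{E:ALEneumann2}, the boundary term converts into exactly the weighted norm of $(-\g_Nq)^{1/2}\Lambda\,\t^{\vec{\alpha}}\g_t^b\Psi$ that appears in $\E$, while the parabolic interior term produces $\D$. Summing over $|\vec{\alpha}|+2b\le 6$ and tracking the commutators between $\t,\g_t$ and the coefficients $A,J,v$ should yield
\[
\E(t)+\int_0^t\D(\tau)\,d\tau \le \E(0)+\int_0^t P(\E(\tau))\,d\tau + \text{lower-order remainders},
\]
with $P$ the cubic polynomial \eqref{E:genericpolynomial}. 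Elliptic regularity applied to \eqref{E:Psi_elliptic}--\eqref{E:Psi_laplace}, combined with the equivalence \eqref{E:EQUIVALENCEONGAMMA}, converts this tangential control into full Sobolev bounds for $q$, $\Psi$, and $h$, recovering the remaining pieces of $S(t)$; the half-derivative loss flagged in the Remark forces the top-order spatial regularity to be $L^2H^{6.5}$ rather than the parabolic $L^2H^7$.

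Exponential decay enters by testing \eqref{E:ALEheat} against $q$: Poincar\'{e} on the perturbed geometry gives $\tfrac{d}{dt}\|q\|_0^2 \le -2(\lambda-C\epsilon_0)\|q\|_0^2$, so $\|q(t)\|_0^2\lesssim e^{-\beta t}\|q_0\|_0^2$ with $\beta=2\lambda-O(\epsilon_0)$, and interpolation with the uniform $H^6$ bound from $\E$ yields the announced $\|q(t)\|_{H^4}^2\lesssim e^{-\beta t}$. The hybrid ingredient is the sharp lower bound on $\chi(t)$: viewing $-\g_Nq$ as satisfying a nondegenerate elliptic inequality at the boundary arising from the linearized heat equation with perturbed coefficients, and comparing against a subsolution built from $\varphi_1$ via Pucci extremal operators, one obtains
\[
\chi(t)\gtrsim e^{-\beta t}\int_\Omega q_0\,\varphi_1\,dx,
\]
which together with \eqref{E:WLOG} prevents the weights $(-\g_Nq)^{1/2}$ inside $\E,\D$ from degenerating. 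Finally, rewriting \eqref{E:ALEneumann2} as $h_t=v\cdot A^TN/\Lambda$ and invoking the decay of $v$ via \eqref{E:ALEv}, the integral $\int_0^\infty |h_t|_{4.5}\,d\tau$ converges, so $h(t)\to \bar h$ with $|\bar h-h_0|_{4.5}\lesssim \sqrt{\epsilon_0}$, producing the limiting steady surface $\bar\Gamma$.

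The main obstacle is the two-way coupling between $\chi(t)$ and $\E$: because $(-\g_Nq)^{1/2}$ appears as a weight inside the boundary part of $\E$, one cannot first bound $\E$ and then separately bound $\chi$. The Pucci lower bound and the energy inequality must be closed simultaneously inside a single bootstrap, and both must be compatible with the exponential rate $\beta=2\lambda-O(\epsilon_0)$. A secondary but essential difficulty is balancing the mixed parabolic/hyperbolic scaling of the system, which is what dictates the particular combinations of $\t^{\vec{\alpha}}\g_t^b$ used in Definition~\ref{D:highorderenergies} and necessitates the half-derivative-loss refinement built into $S(t)$.
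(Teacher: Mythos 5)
Your proposal correctly assembles several of the paper's ingredients --- the harmonic gauge, the equivalence $S\approx E$, the Pucci-operator comparison bound on $\chi(t)$, and the bootstrap/continuation framework --- but it omits the single decisive mechanism of the argument. When you differentiate in time the weighted boundary energy $|(-\g_Nq)^{1/2}\Lambda\,\t^{\vec{\alpha}}\g_t^b\Psi|^2$, the quantity
\[
\sum_{|\vec{\alpha}|+2l\le 6}\int_0^t\int_\Gamma(-\g_Nq_t)\,|\t^{\vec{\alpha}}\g_t^l\Psi|^2\,dS(\Gamma)
\]
appears on the right-hand side of the energy inequality. You fold this into ``lower-order remainders'' and claim the closed form $\E(t)+\int_0^t\D\le\E(0)+\int_0^t P(\E)$, but this is false: the ratio $\g_Nq_t/\g_Nq$ is \emph{not} uniformly small (both numerator and denominator decay at essentially the rate $e^{-\lambda t}$, so after dividing and multiplying by $-\g_Nq$ the best available bound is $|\g_Nq_t/\g_Nq|\lesssim K^2 e^{\eta\tau}$, growing in time). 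Consequently this term is not dominated by a cubic polynomial $P(S(t))$, and the continuity argument you outline does not close. The obstacle you flag at the end --- the two-way coupling between $\chi(t)$ and $\E$ --- is real, but ``closing both estimates simultaneously'' is not how the paper resolves it.

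The actual resolution is a two-phase time argument that your outline has no analogue of. On $[0,T_K]$ with $T_K=\bar C\ln K$, one accepts the exponentially growing Gronwall bound $E(t)\le 2E(0)e^{CK^2t}$ stemming from $|\g_Nq_t/\g_Nq|\lesssim K^2e^{\eta\tau}$; this is precisely what forces the severe $K$-dependence of $F(K)$ in \eqref{E:smallassum}, which you do not derive or explain. At $t=T_K$ the dynamics are dominated by the projection onto $\varphi_1$, and one shows via a \emph{second} parabolic maximum-principle comparison (for $-q_t$, with a barrier built from $\varphi_1$ and the torsion function $\psi$ solving $\Delta\psi=-1$) that $\g_Nq_t(t,\cdot)>0$ on $\Gamma$ for all $t\ge T_K$. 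This makes the dangerous term sign-definite with a favorable sign, so it can be moved to the left-hand side and dropped, yielding $E(t)\le 2E(T_K)$ uniformly for $t\ge T_K$. Without the sign-definiteness step and the barrier argument for $-q_t$ (distinct from the Pucci comparison for $q$ that you do describe), the proof has a genuine gap. A secondary issue: interpolating $\|q\|_0\lesssim e^{-\beta t/2}$ against a uniform $H^6$ bound only gives $\|q\|_4^2\lesssim e^{-\beta t/3}$, not the claimed $e^{-\beta t}$; the decay rate in $H^4$ (indeed $H^5$) is built directly into the definition of $S(t)$ and obtained from the energy estimates rather than by interpolation.
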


\begin{remark}\label{R:fk}
The increasing function $F(K)$ given in (\ref{E:smallassum}) has an explicit form.  For universal constants $\bar{C},C>1$
chosen in Section~\ref{S:proofofthemaintheorem}, 
\be\label{E:F}
F(K)\eqdef \max\{8K^{2C\bar{C}K^2},\bar{C}^{10}(\ln K)^{10}K^{20\bar{C}\lambda}\}.
\ee
\end{remark}
\begin{remark} The use of 
the constant $K$  in our smallness assumption~(\ref{E:smallassum}) allows us to determine a time $T = T_K$
when the dynamics of the Stefan problem become strongly dominated by the  projection of  $q$ onto the first eigenfunction $\varphi_1$
of the Dirichlet-Laplacian.
Explicit knowledge of the $K$-dependence in the smallness assumption~(\ref{E:smallassum}) permits the use of energy estimates 
to show that solutions exist in 
our energy space on the time-interval $[0,T_K]$.  For $t\ge T_K$, 
certain error terms (that cannot be controlled by our norms for large $t$) become \underline{sign-definite with a good sign}.
\end{remark}

\begin{remark}
An analogous theorem was stated in~\cite{HaSh2}, for perturbations of steady surfaces initially close to a sphere. Therefore, this work generalizes that result. Moreover, our methods are general enough to apply to other geometries as well.
An example is that of a free boundary parametrized as a graph over a periodic flat interface.
\end{remark}

\begin{remark}[On compatibility conditions]
The first compatibility condition on the initial temperature $q_0$ is 
\[
q_0|_{\Gamma}=0.
\]
The second condition arises by restricting the parabolic equation~\eqref{E:ALEheat} to the boundary $\Gamma$ and using the boundary conditions~\eqref{E:ALEdirichlet} and~\eqref{E:ALEneumann}. It gives
\[
\g_{NN}q_0+(d-1)\kappa_{\Gamma}\g_Nq_0+(\g_Nq_0)^2 = 0 \ \text{ on } \ \Gamma.
\]
Here $\kappa_{\Gamma}$ stands for the mean curvature of $\Gamma.$ 
Higher order compatibility conditions arise by taking time derivatives of~\eqref{E:ALEheat}, re-expressing them in terms of purely spatial derivatives via~\eqref{E:ALEheat} and restricting the resulting equation to the boundary $\Gamma$ at time $t=0.$
\end{remark}
\begin{remark}
An interesting problem is to determine the asymptotic attractor - the steady state $\bar{\Gamma}$ just from the initial data $(u_0,\Gamma_0).$ This is strongly connected to the so-called momentum problem, which is a problem of determining 
the domain $\Omega$ from the knowledge of its harmonic momenta $c_\phi = \int_\Omega\phi\,dx,$ $\phi:\R^d\to\R, \ \Delta\phi=0.$ A related question arises in the Hele-Shaw problem, see~\cite{GuVa}.
\end{remark}
\subsection{Local well-posedness theories}
In~\cite{HaSh}, we established the local-in-time existence, uniqueness, and regularity for the classical Stefan problem  in $L^2$-based Sobolev spaces, without derivative loss, using
 the functional framework given by 
Definition~\ref{D:highorderenergies}.
This framework is natural,  and  relies on the geometric control of the free-boundary,  analogous to that
 used in the analysis of the free-boundary incompressible Euler equations in~\cite{CoSh07,CoSh10}; the second-fundamental form  is controlled by a
 a natural coercive quadratic form,  generated from the inner-product of the tangential derivative 
 of the cofactor matrix $JA$, and the tangential derivative of the velocity of the moving boundary, and
yields control of the norm
$
\int_{\Gamma}(-\g_Nq(t))|\t^k \, h|^2\,dx' 
$ for any $k\ge 3$. 
The Hopf lemma ensures positivity of $-\g_N q(t)$ and the Taylor sign condition
 on $q_0$ ensures a uniform lower-bound as $t \to 0$.

The first local existence results of classical solutions for the classical Stefan problem were established by 
Meirmanov (see~\cite{Me} and references therein) and Hanzawa~\cite{Ha}. 
Meirmanov regularized the problem by adding artificial viscosity to~(\ref{E:neumann}) and fixed
the moving domain by switching to the so-called von Mises variables, obtaining solutions with less
Sobolev-regularity than the  initial data.  Similarly, Hanzawa
used  Nash-Moser iteration  to construct a local-in-time solution, but again, 
 with derivative loss.
A local-in-time existence result for the one-phase multi-dimensional Stefan problem was proved  in~\cite{FrSo}, 
using $L^p$-type Sobolev spaces.   For the two-phase
Stefan problem,  a local-in-time existence result for classical solutions  was established in~\cite{PrSaSi} in the framework
of $L^p$-maximal regularity theory.

\subsection{Prior work}
There is a large amount of literature on the classical one-phase Stefan problem. For an  
overview we refer the reader to~\cite{Fr82,Me,Vi} as well as the introduction to~\cite{HaSh2}.
First,  {\em weak} solutions were defined in~\cite{Ka, Fr68, LaSoUr}.
For the one-phase problem studied herein, a variational formulation
was introduced in~\cite{FrKi75}, wherein additional regularity results for the
free surface were obtained.
In~\cite{Ca77} it was shown that in some space-time neighborhood of points $x_0$ on the free-boundary  that have  Lebesgue density,
the boundary is  $C^1$ in both space and time, and  second derivatives of temperature
are continuous up to the boundary.  Under some regularity assumptions on the temperature, Lipschitz regularity of the free boundary was shown in~\cite{Ca78}.
In related works~\cite{KiNi77, KiNi78} it was shown that the free boundary is analytic in space and of second Gevrey class in time,
under the a priori assumption that the free boundary is $C^1$  with certain assumptions on the temperature function.
In~\cite{CaFr79} the continuity of the temperature was proved in $d$ dimensions.
As for the two-phase classical Stefan problem, the continuity of the temperature in $d$ dimensions for  weak solutions
was shown in~\cite{CaEv}.

Since the Stefan problem satisfies a maximum principle,
its analysis is ideally suited to  another type of weak solution called the {\it viscosity solution}. Regularity of
viscosity solutions for the two-phase Stefan problem was established in a 
series of seminal papers~\cite{AtCaSa1, AtCaSa2}. 
Existence of viscosity solutions for the one-phase problem was established in~\cite{Ki}, and for the
two-phase problem in~\cite{KiPo}.  A local-in-time
regularity result was established in~\cite{ChKi}, where it was shown that initially Lipschitz free-boundaries become
$C^1$ over a possibly smaller spatial region. 
For  an exhaustive 
overview and introduction to the regularity theory of viscosity solutions we refer the reader 
to~\cite{CaSa}. 
In~\cite{Ko98} the author showed by the use of von Mises variables and 
harmonic analysis, that an priori $C^1$ free-boundary in the two-phase problem becomes smooth.

In order to understand the asymptotic behavior of the classical Stefan problem on  {\em external} domains, in~\cite{QuVa} the authors
proved that on a complement of a given bounded domain $G$, with {\em non-zero} boundary conditions on the fixed boundary $\g G$, the solution
to the classical Stefan problem converges,  in a suitable sense,  to the corresponding solution of the Hele-Shaw problem and sharp global-in-time expansion rates 
for the expanding liquid blob are obtained. Moreover,  the blob asymptotically has the geometry of a ball.
Note that the non-zero boundary conditions act as an effective {\em forcing} which is absent from our problem and the 
techniques of~\cite{QuVa} do not  directly apply.  Since the corresponding Hele-Shaw problem (in the absence of surface tension and forcing) is {\em not}
a dynamic problem, possessing only time-independent solutions,  we are not able to use the  Hele-Shaw solution as a  comparison problem for our problem.

A global stability result for the two-phase classical Stefan problem in a smooth
functional framework was also established in~\cite{Me} for a specific (and somewhat restrictive) perturbation
of a flat interface, wherein the initial geometry is a strip with imposed Dirichlet temperature conditions on the fixed top and
bottom boundaries, allowing for only one equilibrium solution.
A global existence result for smooth solutions was given in~\cite{DaLe} under the log-concavity assumption
on the initial temperature function, which in light of the level-set reformulation of the Stefan problem, requires convexity of the
initial domain (a property that is preserved by the dynamics).

\begin{remark}
We remark that  global stability of solutions in  the {\em presence} of surface tension does not require the use of function framework with a  decaying weight, such as $-\g_Nq(t)$. In this regard, the surface tension problem is
simpler for two important reasons:  first, the surface tension contributes a positive-definite energy-contribution that is uniform-in-time, and provides 
better regularity of the free-boundary (by one spatial derivative), and second, the space of equilibria is finite-dimensional and thus it is easier to understand the degrees-of-freedom that determine the asymptotic state of the system. 
\end{remark}

\subsection{Methodology}

Broadly speaking, our methods combine high-order energy estimates with maximum principle techniques. Once the problem is formulated on the fixed domain with the help of the harmonic gauge explained above, we notice that the natural quadratic energy quantities that track the regularity behavior of the moving boundary, come weighted with the normal derivative of the temperature. This weight is a time-dependent quantity and its evolution is tied to the free boundary itself. This coupling is nonlinear and it is one of the central difficulties in closing our estimates.

Our strategy is based on~\cite{HaSh2} and it contains three basic steps. We first show that under the assumption of smallness on the norm $S(t)$ over some time interval $[0,T]$,
the energy $E$ and the norm $S$ are equivalent, i.e.
\be\label{E:equivalenceintro}
S(t) \lesssim E(t) \lesssim S(t), \quad t\in [0,T].
\ee
Our second step is to establish the key energy inequality in the form
\be\label{E:basicinequalityintro}
E(t) \le C_0 + \frac{1}{2}\sum_{|\vec{\alpha}|+2l\le 6}\int_0^t\int_{\Gamma}(\g_Nq_t)|\t^{\vec{\alpha}}\g_t^lh|^2\,dS(\Gamma) +  P(S(t)),
\ee
where $P$ is a cubic polynomial (see~\eqref{E:genericpolynomial}) and $C_0$ is a small quantity depending only on the initial data. Combining~\eqref{E:equivalenceintro} and~\eqref{E:basicinequalityintro}, we infer that
\be\label{E:basicinequalityintro2}
S(t) \le \tilde{C}_0 +  C\underbrace{\sum_{|\vec{\alpha}|+2l\le 6}\int_0^t\int_{\Gamma}(\g_Nq_t)|\t^{\vec{\alpha}}\g_t^lh|^2\,dS(\Gamma)}_{\text{ dangerous term }}
 +  P(S(t))
\ee
on the time interval of existence. 
If it were not for the sum on the right-hand side above, a simple continuity argument would yield a global existence result for small initial data.  However, the sum appearing on the right-hand side of~\eqref{E:basicinequalityintro2}, while seemingly cubic, cannot be bounded by $P(S(t))$. 
Instead, in the third step we show that after a certain, precisely quantified amount of time, this ``dangerous term" becomes negative and can thus be trivially bounded from above by zero. 

The key novelty with respect to~\cite{HaSh2} is a new quantitative lower bound on the weight $-\g_Nq$ which appears in our definition of the energy $E(t).$ Note that  this quantity is expected to converge exponentially fast to $0$ as the 
unknowns settle to an asymptotic equilibrium. We employ the theory of ``halfeigenvalues" associated with the Bellman-Pucci-type operators to generate a comparison function, which then allows us to use the maximum principle and get a nearly sharp lower bound:
\[
-\g_Nq \gtrsim e^{(-\lambda+O(\epsilon))t},
\]
where $\lambda$ denotes the first Dirichlet eigenvalue associated with the domain $\Omega.$ In our previous work~\cite{HaSh2}, we relied on a rather explicit Bessel-type comparison functions used by Oddson in~\cite{Od}, which in particular, required that we work in a nearly spherical domain. The above lower bound is much more flexible and it is explained carefully in Section~\ref{S:pucci}.

The presentation in the paper is considerably simplified with respect to~\cite{HaSh2} and we believe that our energy method in conjunction with maximum principles can be useful for the stability analysis in other free boundary problems in absence of surface tension.
 
\subsection{Plan of the paper}
In Section~\ref{S:bootstrapassumptions}, we introduce the bootstrap assumptions and formulate the equivalence relationship between the energy and the norm. In Section~\ref{S:pucci} we provide a dynamic lower bound estimate on $\chi(t)$. This is the main new ingredient with respect to~\cite{HaSh2} and we use the theory of half-eigenvalues for the Pucci operators. Finally, in Section~\ref{S:proofofthemaintheorem}, we give the proof of Theorem~\ref{T:main}, thereby explaining our continuity method as well as a comparison argument used to show the sign-definiteness of the ``dangerous linear terms" described  above.

\section{Bootstrap assumptions and norm-energy equivalence}\label{S:bootstrapassumptions}

\subsection{The bootstrap assumptions}

Let $[0,T)$ be a given time-interval of existence of solutions to~\eqref{E:ALE}. We assume that the following two assumptions hold:
\begin{align}
S(t) &\le \epsilon, \ \ t\in[0,T), \label{E:bootstrap1}\\
\chi(t) & \gtrsim c_1 e^{-(\lambda+\frac{\eta}{2})t}, \ \ t\in [0,T), \label{E:bootstrap2}
\end{align}
where $\epsilon$ and $\eta$ are to be chosen sufficiently small later and $\lambda$ stands for the first Dirichlet eigenvalue associated with the domain $\Omega.$

\subsection{Norm $S$ and total energy $E$ are equivalent}

Recall the notation ``$\approx$" introduced in~\eqref{E:EQUIVALENCEONGAMMA}.
\begin{proposition}\label{P:equivalencenormenergy}
There exists a sufficiently small $\epsilon'$ such that if $S(t)\le\epsilon'$ on a time interval $[0,T]$
then
\[
S(t) \approx E(t),\quad \forall t\in [0,T].
\]
\begin{proof}
The proof of this fact is one of the pillars of our strategy. It has been presented in detail in Sections 2.1 - 2.5 and Section 4.2 of~\cite{HaSh2} and, therefore,
 we omit it here. 
We note that the direction $S(t) \lesssim E(t)$ is obviously harder to prove, as the energy function $E(t)$ a-priori controls only tangential derivatives of the temperature $q.$ In~\cite{HaSh2} we use a version of the elliptic regularity statement for equations with Sobolev-class coefficients to obtain control of normal derivatives (see \cite{ChSh14}).
\end{proof}
\end{proposition}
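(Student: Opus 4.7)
The equivalence requires two one-sided estimates. The direction $E(t)\lesssim S(t)$ is essentially algebraic: each summand defining $\E(t)$ and $\D(t)$ is a tangential or cut-off spatial derivative of $q$, of $v=-A^T\nabla q$ via~\eqref{E:ALEv}, or of $\Psi$; each such quantity is bounded pointwise by its counterpart in $S(t)$ using that $|\t^{\vec{\alpha}}|\le|\g^{\vec{\alpha}}|$ up to smooth curvature factors of $\Gamma$, that the cutoffs $\mu$ and $1-\mu$ are bounded by $1$, and that the weighted boundary norms obey $\sqrt{-\g_N q}\,|\Lambda\t^{\vec{\alpha}}\g_t^b\Psi|_{L^2(\Gamma)}\lesssim\sqrt{\chi(t)}\,|\g_t^b h|_{H^{|\vec{\alpha}|}(\Gamma)}$ after invoking~\eqref{E:EQUIVALENCEONGAMMA} together with the trivial bound $\chi(t)\le\|-\g_N q\|_{L^\infty(\Gamma)}\lesssim\|q\|_{H^4}$. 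Time integration then turns the $L^\infty_t$/$L^2_t$-structure of $S(t)$ into the sup/integral structure of $E(t)$.

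\textbf{The hard direction.} To prove $S(t)\lesssim E(t)$, my plan is to read~\eqref{E:ALEheat} as an elliptic equation for $q$ at each fixed time, with source $f=q_t+v\cdot\Psi_t$ and coefficients $a^{jk}:=A^j_iA^k_i$ that the smallness hypothesis keeps $\epsilon'$-close to $\delta^{jk}$ in a sufficiently strong Sobolev topology. I would then (i)~use the cutoff $\mu$ to split $\Omega$ into a collar of $\Gamma$ and an interior region, where standard interior elliptic regularity immediately lifts the $L^2$-type $E$-control to full $H^s$-control; (ii)~near $\Gamma$, differentiate the equation by $\t^{\vec{\alpha}}\g_t^b$, commute past the elliptic operator with commutator errors that are $O(\sqrt{S(t)})$-small, and apply the elliptic regularity theorem for operators with Sobolev-class coefficients of~\cite{ChSh14} together with the Dirichlet trace $q|_\Gamma=0$, converting tangential control into normal control one derivative at a time; (iii)~iterate, using the time-derivative norms already in $E(t)$ as sources, to recover $\|\g_t^l q\|_{H^{6-2l}}$ and $\|\g_t^l q_t\|_{H^{5-2l}}$. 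The height $h$ is then recovered from $\Psi|_\Gamma$ via~\eqref{E:EQUIVALENCEONGAMMA}, and the bootstrap~\eqref{E:bootstrap2} ensures $\chi(t)>0$ so that the $\sqrt{-\g_N q}$ weight can be inverted, producing the corresponding $\chi(t)|\g_t^l h|^2$ terms in $S(t)$.

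\textbf{Main obstacle.} The most delicate piece is the top-order quantity $\|q\|_{L^2 H^{6.5}}^2$ in $S(t)$. Parabolic scaling would naively suggest control in $L^2 H^7$, but the boundary equation~\eqref{E:ALEneumann2} is hyperbolic in character: it only yields $H^{4.5}$-regularity for $h_t$ after de-weighting, which transfers through the relation $\g_N q \sim -\Lambda h_t$ on $\Gamma$ to a trace estimate of only $H^{4}$ for $\g_N q$, so that harmonic extension/elliptic lifting gains just a half derivative above the tangential $H^6$-control of $q$. The other nontrivial ingredient is the exponential prefactor $e^{\beta s}$ on $\|q(s,\cdot)\|_{H^5}^2$: it is invisible to $\E$ and $\D$ directly, and must be extracted by a separate Poincar\'e-type argument exploiting the heat-equation structure on the $\epsilon$-perturbed domain, whose smallest Dirichlet eigenvalue is within $O(\epsilon)$ of $\lambda$. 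The constant $\eta$ in $\beta=2\lambda-\eta$ absorbs this perturbation together with the low-order error terms. These are precisely the points at which the scheme of~\cite{HaSh2} must be reproduced with the greatest care.
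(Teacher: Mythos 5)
Your overall strategy coincides with the paper's (which simply defers to Sections 2.1--2.5 and 4.2 of \cite{HaSh2}): the direction $E(t)\lesssim S(t)$ is treated as essentially algebraic, and the hard direction $S(t)\lesssim E(t)$ is obtained by reading \eqref{E:ALEheat} as a time-parametrized elliptic problem with coefficients $\epsilon'$-close to the identity, splitting $\Omega$ with the cutoff $\mu$, and using interior regularity away from $\Gamma$ together with the Sobolev-class-coefficient elliptic regularity of \cite{ChSh14} near $\Gamma$ to convert the purely tangential control furnished by $\E$ and $\D$ into normal-derivative control. Your discussion of the half-derivative loss at top order ($L^2H^{6.5}$ rather than $L^2H^7$, forced by the hyperbolic character of \eqref{E:ALEneumann2}) and of the Poincar\'e-type origin of the $e^{\beta s}\|q\|_{H^5}^2$ term is consistent with this scheme.

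There is, however, one step in your ``easy'' direction that is wrong as written. You claim
$\sqrt{-\g_N q}\,|\Lambda\t^{\vec{\alpha}}\g_t^b\Psi|_{L^2(\Gamma)}\lesssim\sqrt{\chi(t)}\,|\g_t^b h|_{H^{|\vec{\alpha}|}(\Gamma)}$, justified by ``the trivial bound $\chi(t)\le\|\!-\!\g_N q\|_{L^\infty(\Gamma)}$.'' Since $\chi(t)=\inf_{\Gamma}(-\g_N q)$, the pointwise inequality goes the other way: $-\g_Nq\ge\chi(t)$ on $\Gamma$, so
$\int_\Gamma(-\g_Nq)\,|\t^{\vec{\alpha}}\g_t^b\Psi|^2\,dS\ \ge\ \chi(t)\int_\Gamma|\t^{\vec{\alpha}}\g_t^b\Psi|^2\,dS$,
which is exactly what you need for $S\lesssim E$ on the boundary terms, but \emph{not} for $E\lesssim S$. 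To dominate the $(-\g_Nq)$-weighted boundary norm in $\E$ by the $\chi$-weighted norm in $S$ you need the reverse comparison $\sup_{\Gamma}(-\g_Nq)\lesssim\chi(t)$, i.e.\ a uniform-in-time comparability of the maximum and minimum of the weight along $\Gamma$. This is not trivial: both quantities decay exponentially, and establishing that they decay at comparable rates is precisely part of the barrier/Hopf-lemma analysis (cf.\ Lemma~\ref{L:pucci} and the decay bounds of the type \eqref{E:barrier2}), not a consequence of the definition of $\chi$. Your outline should either invoke that comparability explicitly or restructure the easy direction so that the boundary terms of $\E$ are compared to the $\chi$-weighted $h$-norms only through it. (A smaller quibble: your regularity count for the top-order transfer is off by one — the $S$-norm gives $h_t\in L^2H^{5}(\Gamma)$, hence $\g_Nq\in L^2H^{5}(\Gamma)$ and $q\in L^2H^{6.5}(\Omega)$ — though the qualitative half-derivative-loss mechanism you describe is the right one.)
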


\section{Lower bound on $\chi(t)$ and improvement of the second bootstrap assumption}\label{S:pucci}

The heat equation (\ref{E:ALEheat})  for $q$ can be written in non-divergence form as
\begin{subequations}
\label{E:qparabolic}
 \begin{alignat}{2}
q_t-a_{kj}q,_{kj}-b_kq,_k&=0&&\,\text{ in }\,\Omega,\label{E:parabolic}\\
q&=0&&\,\text{ on }\,\Gamma,\label{E:dirichletc}\\
q(0,\cdot)&=&&q_0>0\,\text{ in }\,\Omega
\end{alignat}
\end{subequations}
where the coefficient matrix $a=(a_{kj})_{k,j=1,2}$, and the vector $b=(b_1,b_2)$ are explicitly given by
\be\label{E:coeff}
a_{kj}\eqdef A^k_iA^j_i;\quad b_k\eqdef A^k_{i,j}A^j_i+A^k_i\Psi^i_t.
\ee

By the bootstrap assumption~\eqref{E:bootstrap1} and the definition~\eqref{E:highordernorm} of $S(t),$ we have that $|h|_{4.5}\lesssim \sqrt{\epsilon}$ on $[0,T)$, and therefore by the Sobolev embedding $H^{1}(\Gamma)\hookrightarrow L^{\infty}(\Gamma)$, we infer that 
$|h|_{W^{3,\infty}}\lesssim \sqrt{\epsilon}.$ From this observation,~\eqref{E:coeff}, and the definition of the transition matrix $A$, we infer that
\begin{align*}
|a_{kj}-\delta_{kj}| & \lesssim \sqrt{\epsilon}, \ \ (k,j=1,2), \\
|b_i| & \lesssim \sqrt{\epsilon}, \ \ (i=1,2).
\end{align*}
Therefore, there exists a constant $K>0$ such that the ellipticity constants associated with the matrix $(a_{ij})_{i,j=1,2}$
are between
the values $\mu_1'=1-\frac{K}{2}\sqrt{\epsilon}$ and $\mu_2'=1+\frac{K}{2}\sqrt{\epsilon}$ uniformly over $[0,T).$

Before we proceed with calculating a lower bound for $\chi(t),$ we briefly explain the Bellman operators \cite{Ar, BuEsQu, FeQuSi,GiTr,Li} which are closely connected to the 
well-known extremal Pucci operators. They will allow us to formulate a nonlinear analogue of the ``first" eigenvalue for the elliptic part of the operator defined in~\eqref{E:parabolic}. 

Let $\Omega$ be an arbitrary simply connected $C^1$-domain.
We define the extremal Pucci operator $\mathcal{M}_{\mu_1,\mu_2}^{-}$~\cite{GiTr,BuEsQu} with
parameters $0<\mu_1\le\mu_2$ by
\be\label{E:pucciminus}
\mathcal{M}_{\mu_1,\mu_2}^{-}\varphi(x) \eqdef  \inf_{\mathcal{L} \in \mathcal{K}_{\mu_1,\mu_2}} \mathcal{L} \varphi(x).
\ee
Here $\mathcal{K}_{\mu_1,\mu_2}$ denotes the set of all linear second-order elliptic operators, whose ellipticity constant is between $\mu_1$ and $\mu_2$, i.e., 
\begin{align}
\mathcal{K}_{\mu_1,\mu_2} \eqdef  \big\{L| \ \ & L=a_{ij}\g_{ij}+b_i\g_i+c, \ \ a_{ij},\,b_i,\, c\in C^0(\Omega), \\
& \mu_1|\xi|^2 \le a_{ij}\xi_i\xi_j \le \mu_2 |\xi|^2, \ \ \xi\in\R^d\big\}. \notag
\end{align}

It is well known that the operators $\mathcal{M}^{-}_{\mu_1,\mu_2}$ are, in general,  fully nonlinear second-order elliptic operators, positive, and homogenous of order one. The latter property allows us to formulate an associated ``eigenvalue" problem, looking for the solutions of
\begin{align}\label{E:eigenvalueproblem}
-\mathcal{M}_{\mu_1,\mu_2}^{-} u & = \lambda u \ \ \text{ in } \ \ \Omega, \\
u & = 0 \ \ \text{ on } \ \ \partial\Omega. \notag
\end{align}
We next state some of the results from~\cite{Li} that 
that will play an important role in this paper (for further references on the so-called half-eigenvalues associated with positive homogenous fully nonlinear operators we refer the reader, for example, to~\cite{BuEsQu, Ar, FeQuSi}):

\begin{itemize}

\item There exist two positive constants $\lambda_1$ and $\lambda_2$ called the {\em first half-eigenvalues} 
and two functions $\varrho_1, \varrho_2\in C^2(\Omega)\cap C(\bar{\Omega})$ such that 
$(\lambda_1,\varrho_1)$ and $(\lambda_2,\varrho_2)$ solve~\eqref{E:eigenvalueproblem}, and 
$\varrho_1>0$, $\varrho_2<0$ in $\Omega$.

\item
The first two half-eigenvalues are simple, i.e. all positive solutions to~\eqref{E:eigenvalueproblem} are of the form
$(\lambda_1,\alpha\varrho_1)$ with $\alpha>0$ and analogously, all negative solutions are of the form $(\lambda_2,\alpha\varrho_2)$, $\alpha>0.$

\item
Finally, the first two half-eigenvalues are characterized in the following manner:
\be\label{E:halfeigenvalue}
\lambda_1 = \sup_{A\in\mathcal{K}_{\mu_1,\mu_2}}\mu(A),\qquad \lambda_2 = \inf_{A\in\mathcal{K}_{\mu_1,\mu_2}}\mu(A),
\ee
where $\mu(A)$ stands for the smallest Dirichlet eigenvalue associated with the second order linear elliptic operator $A.$
\end{itemize}

\subsection{Lower bound on $\chi(t)$ and the improvement of~\eqref{E:bootstrap2}}

The key ingredient to the proofs of Propositions~\ref{P:equivalencenormenergy} and~\ref{P:energyestimate} is a quantitative lower bound on the weight $\chi(t).$ This is achieved by using the maximum principle and constructing an appropriate comparison function.

\begin{lemma}\label{L:pucci}
Under the bootstrap assumptions~\eqref{E:bootstrap1} -~\eqref{E:bootstrap2} with $\epsilon$ sufficiently small, the following inequality holds: 
\[
\chi(t) \gtrsim c_1 e^{-(\lambda+\tilde{\lambda}(t))t},
\]
where $c_1=\int_{\Omega}q_0\varphi_1\,dx$ is the first coefficient in the eigenfunction expansion of the initial datum $q_0$ with respect to the $L^2$ orthonormal basis $\{\varphi_1,\varphi_2,\dots\}$ of the eigenvectors of the operator $-\Delta$
on $\Omega$,
i.e $q_0=c_1\varphi_1+c_2\varphi_2+\dots$.
Moreover, $\lambda$ stands for the smallest Dirichlet eigenvalue associated with the domain $\Omega$ and $\tilde{\lambda}(t)$  satisfies the estimate:
\[
|\tilde{\lambda}(t)| \le C\sqrt{\epsilon}.
\]
In particular, with $\epsilon>0$ sufficiently small so that $C\sqrt{\epsilon}<\eta/4$, we obtain the improvement
of the  
bootstrap bound~\eqref{E:bootstrap2} given by $\chi(t)\gtrsim c_1e^{-(\lambda_1+\eta/4)t}.$
\end{lemma}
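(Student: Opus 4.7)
The plan is to build a pointwise subsolution of the linear parabolic equation~\eqref{E:qparabolic} that captures the correct first-order decay, and to extract the boundary bound via Hopf's lemma. By the bootstrap assumption~\eqref{E:bootstrap1} and the formulas~\eqref{E:coeff}, the drift-diffusion operator $L(t) := a_{kj}(t,\cdot)\partial_{kj} + b_k(t,\cdot)\partial_k$ lies in the class $\mathcal{K}_{\mu_1',\mu_2'}$ uniformly in $t\in[0,T)$, with ellipticity constants $\mu_1',\mu_2'$ within $O(\sqrt\epsilon)$ of $1$. Let $(\lambda_1,\varrho_1)$ denote the first positive half-eigenpair of $-\mathcal{M}^-_{\mu_1',\mu_2'}$ on $\Omega$, normalized by $\sup_\Omega \varrho_1 = 1$. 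Then the infimum definition~\eqref{E:pucciminus} yields
\[
L(t)\varrho_1(x) \ge \mathcal{M}^-_{\mu_1',\mu_2'}\varrho_1(x) = -\lambda_1\varrho_1(x), \qquad t\in [0,T),\ x\in\Omega.
\]

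Set $\underline{q}(t,x) := \beta c_1 e^{-\lambda_1 t}\varrho_1(x)$ with $\beta>0$ small to be fixed. A direct computation using the above inequality gives $\underline{q}_t - L(t)\underline{q} \le -\lambda_1 \underline{q} + \lambda_1 \underline{q} = 0$, so $\underline{q}$ is a classical subsolution of $\partial_t - L(t)$, and $\underline{q}|_\Gamma = 0 = q|_\Gamma$. The delicate piece of the comparison is the initial ordering $q_0 \ge \beta c_1 \varrho_1$ in $\Omega$. The Rayleigh-Taylor lower bound~\eqref{E:WLOG}, $-\partial_N q_0 \ge C_* c_1$, combined with the Hopf bound $-\partial_N \varrho_1 \le M$ on $\Gamma$ and a boundary Taylor expansion, provides $q_0(x) \ge (C_*/2M)\, c_1 \varrho_1(x)$ in a tubular neighborhood $\{d(x,\Gamma) < \delta_0\}$; on the interior compact set $\{d(x,\Gamma) \ge \delta_0\}$, strict positivity $q_0 > 0$ combined with the smallness framework (under which the $L^\infty$-size of $q_0$ is controlled by $c_1$ up to $K$-dependent factors absorbed into $F(K)$) supplies the matching lower bound. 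With $\beta$ chosen accordingly, the parabolic maximum principle yields $q \ge \underline{q}$ on $[0,T)\times\Omega$, and differentiating this nonnegative quantity in the inward normal direction at $\Gamma$ gives
\[
\chi(t) \ge \beta c_1 e^{-\lambda_1 t}\inf_\Gamma(-\partial_N\varrho_1) \gtrsim c_1 e^{-\lambda_1 t}.
\]

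To convert this to the stated bound, I would invoke the variational formula~\eqref{E:halfeigenvalue}: $\lambda_1 = \sup_{A\in\mathcal{K}_{\mu_1',\mu_2'}}\mu(A)$. Since $-\Delta \in \mathcal{K}_{\mu_1',\mu_2'}$ (as $\mu_1' < 1 < \mu_2'$), we have $\lambda_1 \ge \lambda$; conversely, continuity of the principal eigenvalue of a second-order elliptic operator in its coefficients (in $L^\infty$ norm) shows $\mu(A) \le \lambda + C\sqrt\epsilon$ for every $A\in\mathcal{K}_{\mu_1',\mu_2'}$, since any such $A$ has coefficients within $O(\sqrt\epsilon)$ of those of $-\Delta$. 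Thus $\lambda_1 = \lambda + \tilde\lambda$ with $0\le \tilde\lambda \le C\sqrt\epsilon$, as required. Choosing $\epsilon$ small enough that $C\sqrt\epsilon < \eta/4$ then closes the bootstrap~\eqref{E:bootstrap2}. The main obstacle in this scheme is the interior piece of the initial comparison $q_0 \ge \beta c_1 \varrho_1$ with $\beta$ under control: unlike the boundary behavior, which follows from Rayleigh-Taylor plus Hopf, the interior lower bound is not a direct consequence of~\eqref{E:WLOG} and must be extracted from the $K$-controlled smallness of the initial data.
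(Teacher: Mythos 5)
Your proposal shares the paper's main apparatus — the Pucci extremal operator $\mathcal{M}^-_{\mu_1,\mu_2}$, the first positive half-eigenpair $(\lambda_1,\varrho_1)$ from Lions's theory, the subsolution $e^{-\lambda_1 t}\varrho_1$, the Hopf lemma for $\varrho_1$, and the conversion $|\lambda_1-\lambda|\lesssim\sqrt\epsilon$ via continuity of the principal eigenvalue in the coefficients (you even note, correctly and slightly more sharply than the paper, that $\lambda_1\ge\lambda$). So the skeleton is right.

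The genuine gap is exactly the one you flag at the end, and it cannot be repaired as you suggest. You try to run the comparison from $t=0$, which forces the pointwise ordering $q_0\ge\beta c_1\varrho_1$ on all of $\Omega$. Near $\Gamma$ this follows from the Rayleigh--Taylor bound~\eqref{E:WLOG} plus a Taylor expansion, as you say. But on the interior compact set, nothing in the hypotheses gives a quantitative lower bound on $q_0$ relative to $c_1$: condition~\eqref{E:positive} is purely qualitative, the Rayleigh--Taylor condition only constrains the boundary normal derivative, and the smallness $S(0)<\epsilon_0/F(K)$ is an \emph{upper} bound on Sobolev norms. The quantity $K=\|q_0\|_4/\|q_0\|_0$ controls oscillation relative to the $L^2$ size, not the ratio $\inf_{\Omega}q_0/c_1$; one can have $q_0>0$ with moderate $c_1$ and bounded $K$ yet with $q_0$ arbitrarily close to zero at an interior point. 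So there is no universal $\beta$ making $q_0\ge\beta c_1\varrho_1$ hold, and the parabolic maximum principle cannot be launched from $t=0$.

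The paper circumvents this by never comparing at $t=0$. It runs the maximum principle from an arbitrary positive time $\sigma>0$: the parabolic Hopf lemma and the strong maximum principle give strict positivity of $q(\sigma,\cdot)$ in the interior and a comparable boundary normal derivative, so the ratio $q(\sigma)/v(\sigma)$ has a positive lower bound $\delta(\sigma)$ on $\bar\Omega$. The delicate step is showing $\delta(\sigma)$ does not degenerate as $\sigma\to 0$; the paper addresses this by examining the proof of the parabolic Hopf lemma (where $-\partial_N q(\sigma)$ is tied to $\inf_{K_\sigma}q/\sigma$ over the shrinking slab $K_\sigma=\Omega_\sigma\times[\sigma/2,3\sigma/2]$) together with continuity of $\partial_N q$ at $t=0$ and the normalized lower bound $-\partial_N q_0\ge C_*c_1$ from~\eqref{E:WLOG}. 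In short: the interior comparison you want at $t=0$ is exactly what the parabolic regularization is used to manufacture at $t=\sigma>0$, and it is the smoothing effect of the flow, not a hypothesis on $q_0$, that supplies it.
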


\begin{proof}
Let us choose $\mu_1\eqdef 1-K\sqrt{\epsilon}$ and $\mu_2\eqdef 1+K\sqrt{\epsilon}.$  Recall that $K$ was defined in the paragraph after~\eqref{E:coeff}. It follows that $L\in\mathcal{K}_{\mu_1,\mu_2}$. We let $\varrho_1$ be the first half-eigenvector associated to $\mathcal{M}_{\mu_1,\mu_2}^-$ as above.
Consider the following comparison function
\[
v(t,x) \eqdef e^{-\lambda_1t}\varrho_1.
\]
Note that $v$ vanishes on $\partial\Omega = \Gamma.$
A straightforward calculation together with the definition of $\mathcal{M}_{\mu_1,\mu_2}^-$ shows that
\begin{align}
(\g_t-L)v &= -\lambda_1v-e^{-\lambda^-_1t}L\varrho_1 \notag \\
&  \le -\lambda_1v - e^{-\lambda_1t}\mathcal{M}_{\mu_1,\mu_2}^-\varrho_1 \notag \\
& = -\lambda_1v+e^{-\lambda_1t}\lambda_1 \varrho_1 \notag \\
& =0.\notag 
\end{align}
Therefore $v$ is a subsolution to the parabolic problem~\eqref{E:qparabolic}. 
The next key observation is that the eigenfunction $\varrho_1(x)$ behaves like a constant multiple of the distance function $\text{dist}(x,\Gamma)$
as $x$ approaches the boundary $\Gamma$. Namely, since the operator $\mathcal{M}^-$ is concave, the solution is $C^{2,\alpha}$~\cite{Sa,CafCab} and 
the Hopf lemma $-\g_N\varrho_1>0$ holds (see for instance Lemma 2.1 in~\cite{BuEsQu}).
Therefore, function $v$
behaves like $c \ \text{dist}(x,\Gamma)e^{-\lambda^-_1t}$ as $x$ approaches the boundary $\Gamma$ for some constant $c.$
Here $\text{dist}(x,\Gamma)$ denotes the distance function to the boundary $\Gamma.$
We first want to show that for any arbitrarily small time $\sigma>0$ there exists a strictly positive constant $\delta(\sigma)>0$ such that
$q-\delta v $ is a {\em positive} supersolution to the parabolic problem~(\ref{E:qparabolic}) on the time interval $[\sigma,T).$

Since $v$ is a subsolution and $q$ is a solution, it follows that for any $\delta>0$, $q-\delta v$ is a supersolution.  The positivity of  $q-\delta v$ at $t= \sigma $ 
follows from the parabolic Hopf lemma, from which we infer the existence of a
 constant $\delta( \sigma )$ such that $\frac{q}{v}>\delta( \sigma )$ uniformly over
$\bar{\Omega}$. Note that we  have used the fact that $v(\sigma,x)$ behaves like $c \times \mbox{dist}(x)$ near the boundary $\Gamma$  for some positive constant $c$. Thus by the maximum principle, $q-\delta(\sigma) v\ge0$ on $[\sigma,T).$
This implies
\[
q(t,x)\ge \delta(\sigma) v(t,x)\ge C\delta(\sigma) \text{dist}(x,\Gamma) e^{-\lambda_1t}, \ \ t\in[\sigma,T),
\]
which yields
\[
-\frac{\g q(t,x)}{\g N} \ge C\delta(\sigma) e^{-\lambda_1t}, \ \ t\in[\sigma,T).
\]
The above estimate is however not yet satisfactory, as the constant $\delta(\sigma)$ may degenerate as $\sigma$ goes to zero.

We now revisit our usage of the parabolic Hopf lemma above.
For small $t>0$ let 
\[
\Omega_{t} = \{x\in\Omega\,\big| \ \text{dist}(x,\Gamma)\ge t\}, \ \ t>0.
\]
Note that $\Omega_t$ is a compact proper subset of $\Omega.$ 
From the proof of the parabolic Hopf lemma (see for instance Theorem 3.14 in~\cite{Fr64}), the 
value $-\partial q/ \partial N|_{t=\sigma}$ 
is proportional to the minimal value of the temperature $q$ on a space-time region strictly contained in the space-time slab 
$K_{t}:=\Omega_{t}\times [t/2,3t/2]\varsubsetneq\Omega\times[0,2t]$ divided by $t$ (which is proportional to the distance of $K_t$ from the parabolic boundary of $\Omega\times[0,2t]$).
Note that, as $t$ approaches $0$ we may loose uniformity-in-time in our constants. This is however not the case since $\g_Nq$ is continuous 
at $t=0$ and by the assumption~\eqref{E:WLOG}
\be\label{E:WLOG2}
-\g_Nq_0=\frac{-\g_Nq_0}{c_1}c_1\ge C_* c_1.
\ee
Assumption~\eqref{E:WLOG} is used only in~\eqref{E:WLOG2} to insure that there exists a universal constant $C_*$ independent of $c_1$ such that  $L=(-\g_Nq_0)/c_1>C_*.$
The quantity $L$ is dimensionless, and the assumption $L>C_*$  is not a restriction on the initial data. In other words, if we had not assumed~\eqref{E:WLOG},  
the only modification in the statement of the main theorem would be that the smallness assumption on initial data~(\ref{E:smallassum}) 
is additionally expressed in terms of $L$ as well.

As to the bound on $\tilde{\lambda}$, note that by~\eqref{E:halfeigenvalue}, the exponent 
$\lambda_1$ is characterized by the condition
\[
\lambda_1=\sup_{A\in \mathcal{K}_{\mu_1,\mu_2}} \mu(A).
\]
Since $|\mu_i-1|\lesssim\sqrt{\epsilon},$ $i=1,2,$ it follows that for any matrix $A\in\mathcal{K}_{\mu_1,\mu_2}$ the estimate $|A-\text{Id}|\lesssim\sqrt{\epsilon}$ holds.
Since the function $\mu(\cdot)$ is a continuous function from the space of $2\times2$ matrices into $\R$,
it thus follows that 
\[
|\tilde{\lambda}|=|\lambda_1-\mu(\text{Id})| = |\sup_{A\in \mathcal{K}_{\mu_1,\mu_2}}\mu(A)-\mu(\mbox{Id})|  \lesssim \sqrt{\epsilon}. 
\]
\end{proof}

\section{Energy estimates and improvement of the first bootstrap assumption}\label{S:proofofthemaintheorem}

\begin{proposition}\label{P:energyestimate}
Assuming the bootstrap assumption~\eqref{E:bootstrap1} and with  $\epsilon>0$ chosen sufficiently small,
\be\label{E:basicinequality}
E(t) \le C_0 + \frac{1}{2}\sum_{|\vec{\alpha}|+2l\le 6}\int_0^t\int_{\Gamma}(-\g_Nq_t)|\t^{\vec{\alpha}}\g_t^l\Psi|^2\,dS(\Gamma) + C P(S(t)),
\ee
where $C_0$ depends only on the initial data,  $C>0$ is a generic positive constant depending only on the dimension $d$,
and $P$ denotes an order-$r$ polynomial with $r\ge 3$ of the form~\eqref{E:genericpolynomial}.
\end{proposition}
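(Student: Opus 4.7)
The plan is to derive the inequality by applying the differential operators $\t^{\vec\alpha}\g_t^b$ with $|\vec\alpha|+2b\le 6$ to the ALE heat equation~\eqref{E:ALEheat} and the constitutive relation~\eqref{E:ALEv}, and then testing against a carefully chosen multiplier that reproduces the density of $\E(t)$. Near the boundary (where the cutoff $\mu=1$) the natural multiplier is $\t^{\vec\alpha}\g_t^b q+\t^{\vec\alpha}\g_t^b\Psi\cdot v$, while in the interior (where $1-\mu=1$) the standard Cartesian multiplier $\g^{\vec\alpha}\g_t^b q+\g^{\vec\alpha}\g_t^b\Psi\cdot v$ suffices. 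Integration by parts against the leading elliptic piece $A^j_i(A^k_iq,_k),_j$ yields a volume dissipation comparable to $\|\mu^{1/2}\t^{\vec\alpha}\g_t^b v\|_{L^2}^2$ plus a boundary integral that, after using the Stefan condition~\eqref{E:ALEneumann2} to trade boundary values of $\Psi_t$ for $v\cdot A^TN/\Lambda$, reproduces the weighted boundary dissipation $\int_\Gamma(-\g_Nq)|\t^{\vec\alpha}\g_t^b\Psi_t|^2$ appearing in $\D(t)$.

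The \emph{dangerous term} arises from the time-derivative of the weighted boundary energy itself. Writing
\[
\tfrac{1}{2}\tfrac{d}{dt}\!\int_\Gamma(-\g_Nq)\Lambda^2|\t^{\vec\alpha}\g_t^b\Psi|^2\,dS
=\int_\Gamma(-\g_Nq)\Lambda^2\t^{\vec\alpha}\g_t^b\Psi\cdot\t^{\vec\alpha}\g_t^b\Psi_t\,dS
+\tfrac{1}{2}\!\int_\Gamma(-\g_Nq_t)\Lambda^2|\t^{\vec\alpha}\g_t^b\Psi|^2\,dS + R,
\]
the first piece on the right couples to the boundary dissipation produced by the multiplier identity, while the remainder $R$ contains $\Lambda_t$-contributions that are cubic and small by the bootstrap. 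The second piece is precisely the linear boundary term that cannot be absorbed in $\D(t)$ because $-\g_Nq_t$ has no sign a priori; it is recorded verbatim on the right-hand side of~\eqref{E:basicinequality} to be treated separately in the comparison step of Section~\ref{S:proofofthemaintheorem}.

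The remaining error terms split into three families: (i) commutators of $\t^{\vec\alpha}\g_t^b$ with the coefficients $A$, $A^j_iA^k_i$, $J$, and $\Psi_t$; (ii) the forcing $-v\cdot\Psi_t$ on the right of~\eqref{E:ALEheat}; and (iii) lower-order pieces arising from the boundary cutoff $\mu$ and from the interior Cartesian estimates. In each, the bootstrap assumption~\eqref{E:bootstrap1} gives $|h|_{W^{3,\infty}}\lesssim\sqrt\epsilon$, hence $|A-\mathrm{Id}|$, $|J-1|$, $|\Psi-e|$, and $|\Lambda-1|$ are all $O(\sqrt\epsilon)$; combined with the norm–energy equivalence of Proposition~\ref{P:equivalencenormenergy} and the Sobolev embedding in the boundary strip, every commutator and forcing contribution acquires a factor of at least $S(t)^{1/2}$ and is therefore bounded by a generic polynomial $P(S(t))$ of the form~\eqref{E:genericpolynomial}.

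The main obstacle is the top-order case $|\vec\alpha|+2b=6$, where Sobolev embedding alone does not spare a half derivative. Here I would follow the strategy used in~\cite{HaSh2}: use~\eqref{E:ALEv} and the harmonic extension~\eqref{E:Psi_elliptic}--\eqref{E:Psi_laplace} to convert derivatives of $v$ and $\Psi$ into tangential derivatives of $q$ and $h$ on $\Gamma$, invoke the $\sqrt\epsilon$-smallness of $A-\mathrm{Id}$ to absorb the principal commutators back onto the left via the elliptic coercivity of the volume dissipation, and exploit the weighted boundary bound~\eqref{E:EQUIVALENCEONGAMMA} together with Lemma~\ref{L:pucci} to ensure that each top-order boundary term is either part of $\D(t)$, of the weighted exponential $(-\g_Nq_t)$-term displayed in~\eqref{E:basicinequality}, or is cubic. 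Summing over all $|\vec\alpha|+2b\le 6$, time-integrating, and using $S(0)\lesssim C_0$ for the initial contributions then yields~\eqref{E:basicinequality}.
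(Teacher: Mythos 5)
The paper's own proof of Proposition~\ref{P:energyestimate} is a one-line citation to Proposition~3.4 of~\cite{HaSh2}, so there is no detailed argument in this paper against which to compare. Your outline is broadly consistent with the standard tangential-energy approach and correctly identifies the natural multipliers from the structure of $\E$ and $\D$, the role of the cutoff $\mu$, the bootstrap-controlled $O(\sqrt\epsilon)$-smallness of the geometric coefficients, and the origin of the $C_0$-term from the initial data.

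One structural point is muddled and deserves attention. You claim that integration by parts against the elliptic operator, after invoking the Stefan condition~\eqref{E:ALEneumann2}, directly \emph{reproduces the weighted boundary dissipation} $\int_\Gamma(-\g_Nq)|\t^{\vec\alpha}\g_t^b\Psi_t|^2$. But the boundary term produced by a single IBP against the multiplier $\t^{\vec\alpha}\g_t^b(q+\Psi\cdot v)$ is a \emph{cross product} of the form $\int_\Gamma(-\g_Nq)\,\Lambda^2\,\t^{\vec\alpha}\g_t^b\Psi\cdot\t^{\vec\alpha}\g_t^b\Psi_t\,dS$ (since $q=0$ on $\Gamma$ reduces the boundary trace of the test function to $\t^{\vec\alpha}\g_t^b(\Psi\cdot v)$, whose leading part is $\t^{\vec\alpha}\g_t^b\Psi\cdot v$, while the trace of the normal flux contributes the factor $v\cdot N\sim\g_Nq$). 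That cross term is exactly what you then re-express as $\tfrac{1}{2}\tfrac{d}{dt}\int_\Gamma(-\g_Nq)\Lambda^2|\t^{\vec\alpha}\g_t^b\Psi|^2\,dS$ minus the dangerous term minus $R$. So the IBP boundary term gives the \emph{boundary energy} in $\E$ and the dangerous term, not the boundary dissipation. The boundary dissipation $\sum_{|\vec\alpha|+2b\le5}\int_\Gamma(-\g_Nq)|\t^{\vec\alpha}\g_t^b\Psi_t|^2\,dS$ belongs to a separate layer of the estimate (the one tied to the $\t^{\vec\alpha}\g_t^b(q_t+\Psi_t\cdot v)$ multiplier at one lower order, as reflected in the shift $6\to5$ in the index ranges of $\D$). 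As written, your text attributes both the boundary dissipation and the boundary energy to the same IBP boundary term, which would double-count it; the phrase ``the first piece on the right couples to the boundary dissipation produced by the multiplier identity'' is where the argument loses its thread. If you instead say that the IBP boundary term \emph{is} the cross term, and that the boundary dissipation comes from the parallel identity at the $\Psi_t$-level, the logic closes cleanly. Given that the paper itself omits the proof, this is the only substantive concern; the rest of the sketch (commutator families, forcing from $-v\cdot\Psi_t$, top-order treatment via elliptic coercivity with $A\approx\mathrm{Id}$) is the expected route.
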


\begin{proof}
The proof of the proposition is entirely analogous to the proof of Proposition 3.4 from~\cite{HaSh2}.
\end{proof}

\begin{proposition}\label{P:positive}
Let the solution $(q,h)$ to the Stefan problem~(\ref{E:ALE}) exist on a given maximal interval of existence $[0,\mathcal{T})$ on 
which the bootstrap assumptions~(\ref{E:bootstrap1}) and~(\ref{E:bootstrap2}) are satisfied. 
\begin{enumerate}
\item[(a)]
There exists a universal constant $\bar{C}$ such
that if the smallness assumption~(\ref{E:smallassum}) for the initial data holds and if $\mathcal{T} \ge  T_K\eqdef \bar{C}\ln K$,  then
\[
-q_t(T_K,x)>Cc_1e^{-\lambda_1T_K}\varphi_1(x),\quad x\in B_1(0),
\]
where $\varphi_1$ is the first eigenfunction of the Dirichlet-Laplacian on $\Omega$ and $c_1=\int_{\Omega}q_0\varphi_1\,dx$.
As a consequence,
\[
 \inf_{x\in\Gamma}\g_Nq_t(T_K,x)>0.
\]
\item[(b)]
With the smallness assumption~\eqref{E:smallassum}, we indeed have the bound
$\mathcal{T}\ge \bar{C}\ln K.$
\item[(c)]
Moreover, under the same assumption as in part (b), the following lower bound on $\g_Nq(t,x)$ holds:
\be\label{E:positivity}
\inf_{x\in\Gamma}\g_Nq(t,x)>0, \ \ t\in[T_K,\mathcal{T}).
\ee
\end{enumerate}
\end{proposition}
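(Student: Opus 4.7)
The three parts split into two parabolic comparison arguments for $q_t$ (parts~(a) and~(c)) and a continuity/bootstrap argument (part~(b)) based on the inequality~\eqref{E:basicinequality}. The common thread is that on the time scale $T_K=\bar C\ln K$, the first Dirichlet mode of the linear part of~\eqref{E:parabolic} has outpaced every higher mode by a factor that dominates both the $K$-weighted high-mode content of $q_0$ and the nonlinear errors of size $P(S(t))$.

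\textbf{Part (a).} Under the bootstrap~\eqref{E:bootstrap1}, the operator in~\eqref{E:parabolic} is a $\sqrt\epsilon$-perturbation of $-\Delta$. I would expand $q_0=\sum_k c_k\varphi_k$ in the Dirichlet eigenbasis $\{\varphi_k\}$ of $-\Delta$ with eigenvalues $\lambda=\lambda_1<\lambda_2\le\dots$. The Rayleigh-Taylor bound~\eqref{E:WLOG} combined with the Hopf lemma applied to $q_0$ gives $c_1\gtrsim\|q_0\|_0$, while the definition of $K$ and $\lambda_k^2|c_k|\le\|q_0\|_4$ yield $|c_k|\lesssim K\lambda_k^{-2}\|q_0\|_0$, so
\begin{align*}
\bigl|e^{t\Delta}q_0-c_1 e^{-\lambda t}\varphi_1\bigr|\lesssim K\|q_0\|_0 e^{-\lambda_2 t}.
\end{align*}
The difference $q(t)-e^{t\Delta}q_0$ solves a Dirichlet heat equation with forcing of size $\sqrt{P(S(t))}$, controlled by Duhamel. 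Passing the same comparison to $q_t$, which satisfies the parabolic equation obtained by differentiating~\eqref{E:parabolic} in time and vanishes on $\Gamma$, yields
\begin{align*}
-q_t(T_K,x)\ge\lambda c_1 e^{-\lambda T_K}\varphi_1(x)-O\bigl(Ke^{-\lambda_2 T_K}+\sqrt\epsilon\,c_1 e^{-\lambda T_K}\bigr).
\end{align*}
Choosing $\bar C>4/(\lambda_2-\lambda)$, $T_K=\bar C\ln K$ makes the error $\ll c_1 e^{-\lambda T_K}\varphi_1$ on $B_1(0)\Subset\Omega$. A final application of Hopf to $-q_t$, which is nonnegative in the interior and vanishes on $\Gamma$, gives $\g_N q_t(T_K,\cdot)>0$ on $\Gamma$.

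\textbf{Part (b).} A continuity argument. By Proposition~\ref{P:equivalencenormenergy} and~\eqref{E:basicinequality},
\begin{align*}
S(t)\le C_1 S(0)+C_1\sum_{|\vec\alpha|+2l\le 6}\int_0^t\!\!\int_\Gamma|\g_N q_t|\,|\t^{\vec\alpha}\g_t^l\Psi|^2\,dS(\Gamma)+C_1 P(S(t)).
\end{align*}
Without sign information on $\g_N q_t$ on $[0,T_K]$, I bound the dangerous integrand by $|\g_N q_t|/\chi(t)\cdot\chi(t)$ and absorb the factor $\chi(t)$ into the weighted $h$-norms in $S$. The lower bound~\eqref{E:bootstrap2} then produces a Gronwall factor with rate $\lesssim\lambda$, so on $[0,T_K]$ the linear growth is $\exp(C\lambda T_K)=K^{C\bar C\lambda}$. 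The smallness assumption~\eqref{E:smallassum} with $F(K)\supset \bar C^{10}(\ln K)^{10}K^{20\bar C\lambda}$ hence gives $S(T_K)\ll\epsilon$, strictly improving both bootstrap assumptions (the second via Lemma~\ref{L:pucci}). Standard continuation delivers $\mathcal T\ge T_K$.

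\textbf{Part (c), and main obstacle.} Using part~(a) as initial data at $t=T_K$, I would rerun the half-eigenvalue subsolution construction of Lemma~\ref{L:pucci} for the equation satisfied by $-q_t$ (same ellipticity class $\mathcal K_{\mu_1,\mu_2}$, zero Dirichlet boundary data, lower-order terms of size $\sqrt\epsilon$). The function $\delta(T_K)e^{-\lambda_1(t-T_K)}\varrho_1$ is a subsolution, so $-q_t(t,\cdot)\gtrsim\delta(T_K)e^{-\lambda_1(t-T_K)}\mathrm{dist}(\cdot,\Gamma)$ on $[T_K,\mathcal T)$; Hopf on $\Gamma$ then yields $\g_N q_t(t,\cdot)>0$ there for all such $t$, which is the sign-definiteness that makes the dangerous term in~\eqref{E:basicinequality} nonpositive and bounded by zero. \emph{The main obstacle} is the bookkeeping in part~(a): passing from the clean linear Dirichlet heat-semigroup comparison to the $\sqrt\epsilon$-perturbed operator in~\eqref{E:parabolic} while keeping all $K$-dependence explicit, and tuning $\epsilon=\epsilon_0/F(K)$ so that the nonlinear error $P(S)$ does not swamp either the first-mode contribution $c_1 e^{-\lambda T_K}$ or the spectral gap $(\lambda_2-\lambda)T_K$. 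This is precisely what dictates the compound form of $F(K)$ in~\eqref{E:F}.
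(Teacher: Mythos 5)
Your overall architecture (comparison principles for $q_t$ plus a Gronwall argument driven by~\eqref{E:basicinequality}) matches the paper's, and your part (a) is a reasonable reconstruction of the spectral argument the paper imports from~\cite{HaSh}. But there are two genuine gaps in parts (b) and (c).

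In part (b), your Gronwall rate is wrong. After writing the dangerous integrand as $\frac{|\g_Nq_t|}{\chi(\tau)}\cdot\chi(\tau)|\t^{\vec\alpha}\g_t^l\Psi|^2$ and absorbing the second factor into $S(\tau)$, you are still left with the ratio $|\g_Nq_t|_{L^\infty}/\chi(\tau)$, and nothing in the bootstrap gives this a $K$-independent bound of size $\lesssim\lambda$. The numerator requires a pointwise decay estimate of the form $|\g_Nq_t|_{L^\infty}\lesssim K^2c_1e^{-\beta t/2}$ (this is where the high-mode content of $q_0$, i.e. the factor $K^2$, unavoidably enters; Sobolev embedding from the norm $S$ alone gives no decay for $\|q_t\|_{2+\delta}$). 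Dividing by the lower bound $\chi(\tau)\gtrsim c_1e^{-(\lambda+\eta/2)\tau}$ yields $\big|\g_Nq_t/\g_Nq\big|_{L^\infty}\lesssim K^2e^{\eta\tau}$, so the Gronwall growth on $[0,T_K]$ is $e^{CK^2T_K}=K^{C\bar CK^2}$, not $K^{C\bar C\lambda}$. This is precisely why the dominant branch of $F(K)$ in~\eqref{E:F} is $8K^{2C\bar CK^2}$; you have selected the wrong branch, and with your claimed rate the smallness assumption~\eqref{E:smallassum} would be far stronger than needed while your argument, as written, does not actually close.

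In part (c), you cannot simply rerun the subsolution comparison of Lemma~\ref{L:pucci} for $-q_t$, because $-q_t$ does not solve the homogeneous problem: time-differentiating~\eqref{E:parabolic} produces forcing terms $\g_ta_{ij}\,q,_{ij}+\g_tb_i\,q,_i+\cdots$ of indefinite sign, so $-q_t$ is not a supersolution of $\g_t-a_{ij}\g_{ij}-b_i\g_i$ and the maximum principle does not apply to $-q_t-\delta e^{-\lambda_1(t-T_K)}\varrho_1$. The missing idea is a barrier with a \emph{strict, quantified} supersolution margin: one takes $\mathcal F=\kappa_1e^{-\frac32\lambda t}(\varphi_1-\kappa_2\psi)$ with $\psi$ solving $\Delta\psi=-1$, $\psi|_\Gamma=0$, so that $(\g_t-a_{ij}\g_{ij}-b_i\g_i)\mathcal F<-C_1\kappa_1e^{-\frac32\lambda t}$; since the forcing is quadratic and decays like $c_1\epsilon e^{-\beta t}$ with $\beta=2\lambda-\eta>\frac32\lambda$, choosing $\kappa_1\sim c_1\epsilon$ makes $-q_t-\mathcal F$ a genuine supersolution vanishing on $\Gamma$ and positive at $t=T_K$ by part (a). Only then does the comparison argument deliver $-q_t\gtrsim\mathrm{dist}(x,\Gamma)e^{-(\lambda+O(\epsilon))t}$ and hence the sign-definiteness of $\g_Nq_t$ on $[T_K,\mathcal T)$. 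Your sketch treats the forcing as harmless "lower-order terms of size $\sqrt\epsilon$," which skips the step that actually makes the argument work.
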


\begin{proof}
The proof of part (a) of  is  the same as the proof of Lemma 4.2 in~\cite{HaSh}.

As to the proof of part (b), we start by making the claim that the dangerous term from the inequality~\eqref{E:basicinequality} satisfies the bound
\be\label{E:error1}
\Big|\sum_{|\vec{\alpha}|+2l\le 6}\int_0^t\int_{\Gamma}(-\g_Nq_t)|\t^{\vec{\alpha}}\g_t^l\Psi|^2\,dS(\Gamma)\Big|
\le CK^2\int_0^te^{\eta \tau}S(\tau)\,d\tau.
\ee
Note, that if $|\vec{\alpha}|+2l\le6,$ then
$$
\Big|\int_0^t\int_{\Gamma}(-\g_Nq_t) \big|\t^{\vec{\alpha}}\g_t^l\Psi\big|^2\, dS\,d\tau\Big|
=
\Big|\int_0^t\int_{\Gamma}\frac{-\g_Nq_t}{-\g_Nq}(-\g_Nq)\big|\t^{\vec{\alpha}}\g_t^l\Psi\big|^2\Big|\,dS\,d\tau
\le C
\int_0^t\Big|\frac{\g_Nq_t}{\g_Nq}\Big|_{L^{\infty}}S(\tau)\,d\tau.
$$
In order to bound the term $\big|\frac{\g_Nq_t}{\g_Nq}\big|$, we need a decay estimate
for the numerator $|\g_Nq_t|$. 
The Sobolev embedding theory would yield the bound $|\g_Nq_t|_{L^{\infty}}\lesssim\|q_t\|_{2 + \delta }$ for 
$ \delta >0$, but by definition of the norm $S$, it is only the $H^2(\Omega)$-norm of $q_t$ for which
we have the desired decay.  We obtain  the decay estimate for $q_t$ from  Appendix B of \cite{HaSh2}:
\be\label{E:barrier2}
|\g_Nq_t|_{L^{\infty}}\lesssim K^2c_1e^{-\beta t/2}.
\ee
It then follows from the bootstrap assumption~(\ref{E:bootstrap2}) that
\beas
\Big|\frac{\g_Nq_t(\tau)}{\g_Nq(\tau)}\Big|_{L^{\infty}}
\leq \frac{CK^2c_1e^{-(\lambda_1-\eta/2)\tau}}{c_1e^{-(\lambda_1+\eta/2)\tau}}
\leq CK^2e^{\eta \tau} \,,
\eeas
which, in turn,  establishes (\ref{E:error1}). In conjunction with Proposition~\ref{P:energyestimate}, this yields the bound
\be
E(t)\le E(0) + CK^2\int_0^te^{\eta\tau}S(\tau)\,d\tau+ C\epsilon S(t).
\ee
By Proposition~\ref{P:equivalencenormenergy}, with $\epsilon$ sufficiently small,  we conclude that
\be\label{E:gron1}
E(t)\leq2E(0)+CK^2\int_0^{t}e^{\eta \tau}E(\tau)\,d\tau,\quad t\in[0,\mathcal{T}],
\ee
where $\mathcal{T}$ is the maximal interval of existence on which the
bootstrap assumptions~(\ref{E:bootstrap1}) and~(\ref{E:bootstrap2}) hold (with $\epsilon$ sufficiently small).
A straightforward Gronwall-type argument based on~\eqref{E:gron1}, identical to Step 1 of the proof of Theorem 1.2 in~\cite{HaSh},  implies that
as long as the $\eta$ from the bootstrap assumption~\eqref{E:bootstrap2} is smaller than $\bar{C}\ln K$, the maximal interval of existence $[0,\mathcal{T})$, on which both the bootstrap assumptions~\eqref{E:bootstrap1} and~\eqref{E:bootstrap2} are valid, satisfies
$\mathcal{T}>\bar{C}\ln K$, and the following exponentially growing bound holds:
\be\label{E:intermed}
E(t) \le 2E(0)\, e^{CK^2 t}, \ \ t\in[0,\mathcal{T}).
\ee

To prove the part (c), we resort to  maximum principle techniques once again. 
To this end, we define a barrier function $\psi$ to be the solution of the 
following elliptic problem
\begin{align}
\Delta\psi & = -1 \ \ \text{ in } \Omega \label{E:barrierfunction}\\
\psi & = 0 \ \ \text{ on } \Gamma. \notag
\end{align}
We then define the comparison function $\mathcal{F} :[0,\mathcal{T})\times\Omega\to \R$ via
\be\label{E:barrier}\mathcal{F} (t,x)=\kappa_1 e^{-\frac{3}{2}\lambda t}(\varphi_1(x)-\kappa_2\psi),
\ee
with positive constants $\kappa_1,\kappa_2$ to be specified later.
A straightforward calculation shows that
\begin{align}
(\g_t-a_{ij}\g_{ij}-b_i\g_i)\mathcal{F} 
 & =\kappa_1e^{-\frac{3}{2}\lambda t}\big[-\frac{1}{2}\lambda \varphi_1-\kappa_2+\frac{3}{2}\lambda \kappa_2\psi 
 -(a_{ij}-\delta_{ij})(\varphi_1-\kappa_2\psi)-b\cdot(\nabla\varphi_1-\kappa_2\nabla\psi)\big].  \label{E:comp}
\end{align}
Note that
the first and the second terms in the parenthesis on the right-hand side of~(\ref{E:comp}) are negative, while the fourth and the fifth terms,  are 
small, being of order $\epsilon$.
If $x$ is close to $\Gamma$, then the second term dominates the third term and if $x$ is away from the boundary $\Gamma$, then one can choose $\kappa_2>0$
so that the first term dominates the third term. Thereby we use the fact that $\varphi_1$ and $\psi$ both vanish at $\Gamma$, they are both non-negative 
(by the maximum principle), and both satisfy the Hopf lemma (since they are both super-solutions).
It follows, then,  that there exists a $\kappa_2>0$ and some constant $C_1$ such that
\be\label{E:neg}
(\g_t-a_{ij}\g_{ij}-b_i\g_i)  \mathcal{F} <-C_1\kappa_1e^{-\frac{3}{2}\lambda t}.
\ee
It then follows from~(\ref{E:neg}) and~(\ref{E:coeff}) that
\be\label{E:neg1}
(\g_t-a_{ij}\g_{ij}-b_i\g_i)(-q_t- \mathcal{F} )>-(\g_ta_{ij}\,q,_{ij}+\g_tb_{i}\,q_i+\g_tA_{,i}^k\,q,_kw^i+A^k_iq,_kw^i_t)+C_1\kappa_1e^{-\frac{3}{2}\lambda t}.
\ee
Note, however, that the term in parenthesis on the right-hand side above
is a quadratic non-linearity and as such decays at least
as fast as $e^{-\beta t}$:
\begin{align*}
& \|\g_ta_{ij}\,q,_{ij}+\g_tb_{i}\,q_i+\g_tA_{,i}^k\,q,_kw^i+A^k_iq,_kw^i_t\|_{L^{\infty}} 
\leq C_2c_1\epsilon e^{-\beta t}. \label{E:track}
\end{align*}
Now, using~(\ref{E:neg1}) and the above bound, we note that by choosing the constant $\kappa_1\eqdef \frac{C_2}{C_1}c_1\epsilon $, 
we have that
\[
(\g_t-a_{ij}\g_{ij}-b_i\g_i)(-q_t-  \mathcal{F} )>C_2c_1\epsilon e^{-\frac{3}{2}\lambda t}-C_2c_1\epsilon e^{-\beta t}>0,
\]
since $\beta=2\lambda -\eta>\frac{3}{2}\lambda $.
The previous bound implies that $-q_t-  \mathcal{F} $ is a supersolution for the operator $\g_t-a_{ij}\g_{ij}-b_i\g_i$. Moreover,
by the construction of $ \mathcal{F} $, we have $-q_t-  \mathcal{F} =0$ on $\Gamma$.
Furthermore, at time $T_K=\bar{C}\ln K$, we have by the part (b) of the proposition and~(\ref{E:barrier}), that
\[
(-q_t- \mathcal{F} )|_{T=\bar{C}\ln K}>Cc_1e^{-\lambda T}\varphi_1(x)-Cc_1\epsilon e^{-\frac{3}{2}\lambda T}\varphi_1(x)
+Cc_1\epsilon \kappa_2e^{-\frac{3}{2}\lambda T}\psi(x)>0
\]
for $\epsilon$ sufficiently small. Thus, as in the proof of Lemma~\ref{L:pucci}, there exists a constant $m>0$ such that
\[
-q_t(t,x)-  \mathcal{F} (t,x)\geq m \ \text{dist}(x,\Gamma)e^{-(\lambda+O(\epsilon ))t}, \ \ t>T_K,
\]
or, in other words,
\beas
-q_t(t,x)&\geq& m \ \text{dist}(x,\Gamma)e^{-(\lambda +O(\epsilon ))t}+Cc_1\epsilon \, \text{dist}(x,\Gamma)e^{-\frac{3}{2}\lambda t}\left(\frac{\varphi_1(x)}{\text{dist}(x,\Gamma)}-\kappa_2\frac{\psi(x)}{\text{dist}(x,\Gamma)}\right)\\
&=& \text{dist}(x,\Gamma)e^{-(\lambda +O(\epsilon ))t}
\Big(m+Cc_1\epsilon e^{(-\frac{1}{2}\lambda t-O(\epsilon ))t}\left(\frac{\varphi_1(x)}{\text{dist}(x,\Gamma)}-\kappa_2\frac{\psi(x)}{\text{dist}(x,\Gamma)}\right)\Big),
\eeas
which readily gives the positivity of $\g_Nq_t$ on the time-interval $[T_K,\mathcal{T}[$ since $\frac{\varphi_1(x)}{\text{dist}(x,\Gamma)}-\kappa_2\frac{\psi(x)}{\text{dist}(x,\Gamma)}>0$
by our choice of $\kappa_2$ above.
We conclude that the positivity of $-q_t$ at time $T_K=\bar{C}\ln K$ is a property preserved by our bootstrap regime and,
moreover, we obtain a quantitative lower bound on $\g_Nq_t$ on the time interval $[T_K,\mathcal{T}[$.
\end{proof}

\begin{remark}
In the proof of part (b) of Proposition~\ref{P:positive}, we made a rather crude use of the energy estimate given by Proposition~\ref{P:energyestimate}. In particular, we cannot use this argument to prove global existence, as the constants grow in time;
however, in part (c) of the proposition, we have used a more sophisticated argument based on the maximum principle to infer the sign-definiteness 
of the term $\g_Nq_t$ after a fixed amount of time has passed.
\end{remark}

\noindent
{\bf Proof of Theorem~\ref{T:main}.}
Assume for contradiction that $\mathcal{T}<\infty.$
For any $t\in[T_K,\mathcal{T}[$, the energy identity takes the form
\begin{align*}
E(t)+\frac{1}{2}\sum \int_{T_K}^t\int_{\Gamma}\g_Nq_t|\t^{\vec{\alpha}}\g_t^l\Psi|^2\,dS
\le E(T_K)+P(S(t)) \le E(T_K)+ O(\epsilon)E(t).
\end{align*}
Note here the absence of the exponentially growing term in the above bound 
as compared to the inequality~\eqref{E:intermed}. 
This is due to the fact that terms $\int_{T_K}
^t\int_{\Gamma}\g_Nq_t|\t^{\vec{\alpha}}\g_t^l\Psi|^2\,dx$, $|\vec{\alpha}|+2l\le6,$ are positive and no longer treated as error terms.
By absorbing the small multiple of $E(t)$ into the left-hand
side,  and using the positivity of $\g_Nq_t$ from Step 2, we obtain that
\be\label{E:smallness2}
E(t)
\leq 2E(T_K)\leq 8E(0)e^{2CK^2T_K}, \ \ t\in[T_K,\mathcal{T}),
\ee
by~(\ref{E:intermed}).
Finally, we choose $\epsilon_0$ in the statement of Theorem~\ref{T:main} so that $\epsilon_0<\epsilon/2$.
The bound~(\ref{E:smallness2}) and the condition $E(0)\lesssim\epsilon_0/F(K)$ (with $F(K)$ given as in~(\ref{E:F}))
imply
\[
E(t)
\leq \frac{\epsilon}{2}, \ \  t\in[T_K,\mathcal{T}).
\]
Together with Lemma~\ref{L:pucci}, we infer that the bootstrap assumptions~(\ref{E:bootstrap1}) and~(\ref{E:bootstrap2}) are improved.
Since $E(\cdot)$ is continuous in time, we can
extend the solution by the local well-posedness theory to an interval $[0,\mathcal{T}+T^*]$
for some small positive time $T^*$. This however contradicts the maximality of $\mathcal{T}$ if $\mathcal{T}$ were finite and
hence $\mathcal{T}=\infty$.
This concludes the proof of the main theorem.

\section*{Acknowledgements}
 SS was supported by the National Science Foundation under grant DMS-1301380
and by the Royal Society Wolfson Merit Award.  MH was partly supported by the National Science Foundation under grant  DMS-1211517.  Some of this work was completed 
during the program {\it Free Boundary Problems and Related Topics} at the Isaac Newton Institute for Mathematical Sciences at Cambridge, UK.   We are
grateful to the organizers,  Gui-Qiang Chen, Henrik Shahgholian and Juan Luis V\'{a}zquez,   for both the invitation to participate in the program and to contribute to  this special volume.

\end{document}